
\documentclass[review,12pt]{elsarticle}



\biboptions{numbers,sort&compress}
\usepackage{amssymb}
\usepackage{graphicx}
\usepackage{amsmath}
\usepackage{lineno,hyperref}
\usepackage{amsmath}
\usepackage{appendix}
\usepackage{graphicx}
\usepackage{setspace}
\usepackage{geometry}
\geometry{a4paper,left=2.6cm,right=2.6cm,top=3.5cm,bottom=3cm}
\usepackage{setspace}

\renewcommand{\theequation}{\arabic{section}.\arabic{equation}}
\numberwithin{equation}{section}
\newtheorem{theorem}{Theorem}[section]
\newtheorem{lemma}{Lemma}[section]
\newtheorem{proposition}{Proposition}[section]
\newtheorem{remark}{Remark}[section]
\newtheorem{definition}{Definition}[section]
\renewcommand{\theequation}{\arabic{section}.\arabic{equation}}

\linespread{1.2}

\begin{document}
\begin{frontmatter}



\title{A Free Boundary Problem for a Ratio-dependent Predator-prey System \tnoteref{mytitlenote}}
\tnotetext[mytitlenote]{This work was supported by \href{http://www.ctan.org/tex-archive/macros/latex/contrib/elsarticle}{NSFC 12071316}.}
\author{Lingyu Liu}
\ead{liu\_lingyu@foxmail.com}
\address{Department of Mathematics, Sichuan University, Chengdu 610065, PR China}

\begin{abstract}\large
In this paper we study a free boundary problem for a ratio-dependent predator-prey system in one space dimension, with the free boundary only caused by the prey. The long time behaviors of solution as $t\rightarrow\infty$ are discussed. Then we establish a spreading-vanishing dichotomy and the criteria for spreading and vanishing. Finally, when spreading occurs, an accurate limit of the asymptotic speed of $h(t)$ is provided as $t\rightarrow\infty$.
\end{abstract}

\begin{keyword}\large
\texttt{free boundary\sep ratio-dependent model\sep spreading-vanishing dichotomy\sep criteria \sep asymptotic speed}
\end{keyword}

\end{frontmatter}


\section{\large{Introduction}}\large
The expanding of an alien or a invasive species and the conservation of native species are discussed widely in biological mathematics.
Many mathematical models are proposed to investigate the basis of the above problems. For example, Du \cite{Du1,Du3,Du4,Du5}, Wang
\cite{wang6,wang7,wang8,wang9,wang10,wang11,wang12} and other scholars \cite{Du6,14,15,16} have studied a lot of biomathematics models with a free boundary and established many remarkable results.  The predator-prey model in a one-dimensional habitat is represented by the following system
$$
\left\{
\begin{array}{ll}
u_t-u_{xx}={\lambda} u-u^2-bf(u,v)v,    &t>0,~x>0,\\
v_t -dv_{xx}={\mu} v-v^2+cf(u,v)v,       &t>0,~x>0,
\end{array}
\right.
$$
where $u(t,x)$ and $v(t,x)$ denote the population densities of prey and predator at some time $t$ and position $x$, respectively; ${\lambda}$, ${\mu}$, $b$, $d$, $c$ are positive constants; the function $f(u,v)$ represents functional response. The classical Lotka-Volterra model assumes that $f(u,v)=u$.
As far as we know, this model displays the ``paradox of enrichment" raised by Hairston et al
(\cite{parodoxofenrichment}) which states that sufficient enrichment or increase of the prey-carrying capacity will destabilize the otherwise stable interior equilibrium. Another is the so-called ``biological control paradox" proposed by Luck (\cite{biologicalcontrolparadox}), stating that it does not exist a both low and stable prey equilibrium density. While the ratio-dependent response function $f(u,v)=\frac{u}{u+mv}$ ($m>0$) does not own both defects mentioned above. More and more evidences show that in some specific ecological environments, especially when the predators have to actively seek, share and plunder the preys, a more reasonable predator-prey model should be the ratio-dependent model(\cite{17,18,19}).

In the real world, the following phenomenons are very common to us,

(i) One kind of species (prey) inhabits in a initial region. At some time, another kind of species (the alien or invasive species, predator) invades such region.

(ii) At the initial state, a kind of pest species (prey) occupied an area (initial habitat). In order to control and eliminate such pest species, the most economical and environment-friendly approach is to use biological control, in other words, to put a certain natural enemy of pest species (predator) in this area.

Generally, both predator and prey tend to migrate outward to get a new habitat. Significantly, the prey has a stronger tendency to avoid being hunted in both phenomenons above. So it is rational that the free boundary is determined only by the prey. In this model, we suppose that the predator almost lives on this prey as a result of the features of partial eclipse, picky eaters and the restraint of external environment. In order to survive the predator should follow almost the same trajectory as prey, and so is roughly consistent with the move curve (free boundary). Assume that the left boundary is fixed and the right boundary is free, and the spreading front speed is proportional to the prey's population gradient at the front. Thus, the biometrics model in one dimensional is established as follows,
\begin{equation}\label{Q}
\left\{
\begin{array}{ll}
u_t-u_{xx}={\lambda} u-u^2-\frac{buv}{u+mv},    &t>0,~0< x<h(t),\\
v_t -dv_{xx}={\mu} v-v^2+\frac{cuv}{u+mv},       &t>0,~0<x<h(t),\\
u_x(t,0)=v_x(t,0)=0,                          &t\geq0,\\
u(t,x)=v(t,x)=0,                                  &t\geq0,~x\geq h(t),\\
h^\prime (t)=-{\rho} u_x(t,h(t)),                &t\geq0,\\
u(0,x)=u_0(x),~v(0,x)=v_0(x),      &0\leq x\leq h_0,\\
h(0)=h_0,
\end{array}
\right.
\end{equation}
where ${\rho}$, $h_0$ are given positive constants and $x=h(t)$ is the free boundary to be solved. The initial functions $u_0(x)$ and $v_0(x)$ satisfy
$$
\begin{aligned}
u_0, v_0\in {C}^2([0,h_0]),~u_0(x), v_0(x)>0 , x\in[0,h_0),\\
u^\prime_0(0)=u_0(h_0)=v^\prime_0(0)=v_0(h_0)=0.~~~~~~~~
\end{aligned}
$$

Recently, Wang and Zhang (\cite{wang9}) investigated the same spreading mechanism of the classical Lotka-Volterra type prey-predator model ($f(u,v)=u$). They discussed the asymptotic behaviors of two species and established the criteria for spreading and vanishing. In particular, they found some new phenomena; for instance, the sharp criteria for spreading and vanishing in regard to the initial habitat $h_0$ is not true in some cases. Besides, Wang (\cite{wang11}) studied a free boundary caused by a ratio of prey and predator of the classical Lotka-Volterra type prey-predator model. Wang and Zhao studied a double free boundaries in one space dimension in \cite{wang6} and they generalized these results to higher dimension and heterogeneous environment in \cite{Wangzhao2014a}. Moreover, Wang and Zhang (\cite{wang12}) studied a free boundary caused only by prey in left boundary and predator in right boundary.

Similar to the proofs of \cite[Theorem 2.1]{wang10} or \cite[Theorem 1.1]{wang9} with suitable modifications, we can prove the global existence, uniqueness and regularity of solution $(u,v,h)$ as follows.
\begin{theorem}\label{th1}
The problem {\upshape(\ref{Q})} admits a unique solution
$$
(u,v,h)\in [{C}^{\infty}(D_{\infty})]^2\times {C} ^{\infty}((0,\infty)),
$$
where $D_{\infty}=\{(t,x):t\in(0,\infty),~x\in[0,h(t)]\}$. Moreover, there exists a positive constant $K$ depending only on $\max u_0$, $\max v_0$, $\lambda$, $\mu$, $h_0^{-1}$, and $c$, such that
$$
0<u(t,x),~v(t,x)\leq K,~~~~0<h^{\prime}(t)\leq\rho K, ~~~~\forall~t>0,~0<x<h(t).
$$
\end{theorem}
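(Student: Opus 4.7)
The plan is to adapt the standard free-boundary scheme of Wang--Zhang \cite{wang10,wang9} to the ratio-dependent response $f(u,v)=u/(u+mv)$. For local existence and uniqueness on a short time interval $[0,T]$, I will straighten the moving boundary via the change of variables $y=h_0 x/h(t)$, which converts \eqref{Q} into a quasilinear parabolic system for $(\tilde u,\tilde v)(t,y)$ on the fixed cylinder $(0,T]\times(0,h_0)$ coupled with an ODE for $h$. Given a trial $h\in C^{1+\alpha/2}([0,T])$ close to $h_0$, one solves the resulting linear system for $(\tilde u,\tilde v)$ by parabolic Schauder theory and then updates $h$ via the Stefan-type relation $h(t)=h_0-\rho\int_0^t \tilde u_y(s,h_0)(h_0/h(s))\,ds$. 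For $T$ small enough, the composed map is a contraction on an appropriate H\"older ball, which yields a unique local classical solution.

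To derive the uniform $L^{\infty}$ bounds, I will exploit the signs of the reaction terms. Since the removal term $buv/(u+mv)$ is non-negative, the prey satisfies $u_t-u_{xx}\le\lambda u-u^2$, and since $cuv/(u+mv)\le cv$, the predator satisfies $v_t-dv_{xx}\le(\mu+c)v-v^2$. Comparison with the associated logistic ODEs, together with the Neumann condition at $x=0$ and the zero Dirichlet condition at $x=h(t)$, yields $u\le\max\{\|u_0\|_\infty,\lambda\}$ and $v\le\max\{\|v_0\|_\infty,\mu+c\}$. For the bound on $h'(t)$, I will construct a barrier
$$
w(t,x)=K\bigl[\,2M(h(t)-x)-M^2(h(t)-x)^2\,\bigr]
$$
on the strip $\Omega_M=\{(t,x):h(t)-1/M\le x\le h(t)\}$. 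Choosing $M$ large enough (depending only on $\lambda$, $h_0^{-1}$, and the already controlled $L^{\infty}$ bound) ensures both that $\Omega_M$ sits inside the physical domain and that $w$ is a super-solution for the $u$-equation on $\Omega_M$ with $w\ge u$ on the parabolic boundary; comparison then gives $w\ge u$ in $\Omega_M$, and since $w$ and $u$ both vanish at $x=h(t)$, we obtain $u_x(t,h(t))\ge w_x(t,h(t))=-2KM$, whence $h'(t)=-\rho u_x(t,h(t))\le 2\rho K M$. Absorbing $2M$ into $K$ produces the stated estimate, while positivity $h'(t)>0$ follows from Hopf's lemma applied to the $u$-equation at $x=h(t)$.

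With uniform bounds on $u$, $v$ and $h'$ in hand, the local solution extends to $[0,\infty)$ by the customary continuation argument, and $C^{\infty}$-smoothness of $(u,v,h)$ follows from a parabolic Schauder bootstrap in the straightened variables. The principal obstacle, which distinguishes this problem from the Lotka--Volterra case treated in \cite{wang10,wang9}, is the failure of Lipschitz regularity of $u/(u+mv)$ at the free boundary where both $u$ and $v$ vanish. This is handled by the elementary bounds $buv/(u+mv)\le bu$ and $cuv/(u+mv)\le cv$, which show the reaction terms are uniformly bounded and remain Lipschitz in $(u,v)$ away from the corner; the strong maximum principle further gives $u,v>0$ strictly inside $\{0<x<h(t)\}$, so $u+mv$ is bounded away from zero on every interior compact set, and the smoothness bootstrap proceeds without further obstruction.
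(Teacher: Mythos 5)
The paper does not actually prove Theorem \ref{th1} itself but defers to the arguments of \cite{wang10} and \cite{wang9}, and your outline---boundary straightening plus a contraction mapping for local existence, logistic comparison for the $L^\infty$ bounds, the Du--Lin quadratic barrier on the strip $\{h(t)-1/M\le x\le h(t)\}$ for $0<h'(t)\le\rho K$, and continuation with a Schauder bootstrap---is exactly that standard scheme adapted to the ratio-dependent term, so it is essentially the same approach. Two points worth tightening, both inherited from the paper's own (omitted) argument rather than errors of yours: the barrier constant $M$ must also dominate a quantity like $\|u_0'\|_{\infty}/K$ so that $w(0,\cdot)\ge u_0$ on the initial segment of the strip (the stated dependence of $K$ only on $\max u_0$ glosses over this), and your bootstrap delivers $C^\infty$ smoothness only on interior compacta where $u+mv>0$, whereas the theorem asserts regularity up to $x=h(t)$ where the ratio-dependent nonlinearity is merely Lipschitz.
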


This paper is organized as follows. In section 2, we study the long time behaviors of $(u,v)$ when $t\rightarrow\infty$. In section 3, we establish the spreading-vanishing dichotomy and provide the criteria for spreading and vanishing. Section 4 is devoted to an accurate limit of the asymptotic speed of $h(t)$. Finally, in section 5 we have a brief discussion.

\section{\large{Long time behaviors of $(u,v)$}}\large
In this section, we will study the long time behaviors of solution $(u,v)$. It follows from Theorem \ref{th1} that $x=h(t)$ is monotonically increasing. So $\lim\limits_{t\rightarrow\infty}h(t)=h_{\infty}\in(0,\infty]$. If $h_{\infty}<\infty$ and $\max\limits_{0\leq x\leq h(t)}u(t,\cdot)\rightarrow0$ as $t\rightarrow0$, then eventually the prey $u$ fails to establish and vanishes, which is called vanishing. If $h_{\infty}=\infty$, then the prey $u$ can successfully establish itself in the new environment, which is called spreading.
\subsection{\large{Vanishing case $h_{\infty}<\infty$}}
We first give a vital estimate like the proposition below. The proof is similar to \cite[Theorem 4.1]{wang12} and \cite[Theorem 2.1]{wang9}, so we omit it.

\begin{proposition}\label{lm2}
Let $(u,v,h)$ be a solution of problem {\upshape(\ref{Q})}. If $h_{\infty}<\infty$, then there exists a positive constant $M$ such that
\begin{equation}\label{8}
\|u(t,\cdot),~v(t,\cdot)\|_{{C}^1([0,h(t)])}\leq M,~~\forall t>1
\end{equation}
and
\begin{equation}\label{9}
\lim\limits_{t\rightarrow\infty}h^{\prime}(t)=0.
\end{equation}
\end{proposition}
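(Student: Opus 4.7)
The plan is to follow the standard free-boundary strategy used in \cite{wang12,wang9}: straighten the moving boundary by the change of variables $y=x/h(t)$, apply parabolic $L^p$ and Schauder estimates to the resulting system on the fixed interval $[0,1]$, and then transfer the regularity back to $[0,h(t)]$ to obtain (\ref{8}); the estimate (\ref{9}) on $h'(t)$ will follow from integrability of $h'$ together with the uniform H\"older continuity furnished by the Schauder bound. Throughout we freely use Theorem \ref{th1}, which provides $0<u,v\le K$ and $0<h'(t)\le \rho K$, and the hypothesis $h_{\infty}<\infty$, which guarantees $h_0\le h(t)\le h_{\infty}$.

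Set $U(t,y)=u(t,yh(t))$ and $V(t,y)=v(t,yh(t))$ for $(t,y)\in(0,\infty)\times[0,1]$. The system (\ref{Q}) becomes a semilinear parabolic system of the form
\begin{equation*}
U_t-h(t)^{-2}U_{yy}-\bigl(yh'(t)/h(t)\bigr)U_y=\lambda U-U^2-\tfrac{bUV}{U+mV},
\end{equation*}
with the analogous equation for $V$ (diffusion coefficient $d\,h(t)^{-2}$), Neumann conditions at $y=0$, and homogeneous Dirichlet conditions at $y=1$. All coefficients and reaction terms are uniformly bounded, and the equations are uniformly parabolic since $h_0\le h\le h_{\infty}$. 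Applying interior-boundary parabolic $L^p$ estimates on the successive time slabs $[n,n+2]\times[0,1]$, combined with Sobolev embedding and a Schauder bootstrap, yields
\begin{equation*}
\|U\|_{C^{1+\alpha/2,\,2+\alpha}([n+1,n+2]\times[0,1])}+\|V\|_{C^{1+\alpha/2,\,2+\alpha}([n+1,n+2]\times[0,1])}\le C
\end{equation*}
with $C$ independent of $n\ge 1$. Translating back via $u_x=h^{-1}U_y$ and $v_x=h^{-1}V_y$, together with $h\ge h_0$, gives (\ref{8}).

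For (\ref{9}), integrating the free boundary equation yields $\int_0^{\infty} h'(t)\,dt=h_{\infty}-h_0<\infty$. The identity $h'(t)=-\rho U_y(t,1)/h(t)$, combined with the $C^{1+\alpha/2,\,2+\alpha}$ bound above and the uniform bounds on $h$ and $h'$, shows that $h'$ is uniformly H\"older continuous on $[1,\infty)$. If $h'(t_n)\ge\varepsilon_0$ for some sequence $t_n\to\infty$, this uniform continuity would force $h'\ge\varepsilon_0/2$ on intervals of fixed length around each $t_n$, contradicting the integrability of $h'$; this is the standard Barbalat-type argument, and it gives $h'(t)\to 0$.

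The main technical hurdle is obtaining regularity up to the moving lateral boundary $x=h(t)$, which is itself part of the unknown. The straightening transformation trades this difficulty for an advection term $(yh'/h)U_y$ whose coefficient is only as smooth as $h'$; because $h'$ is uniformly bounded, the $L^p$ theory applies, and one iterates until enough H\"older regularity of $h'$ is recovered for the Schauder step to close. Once the uniform $C^{1+\alpha/2,\,2+\alpha}$ estimate is in hand, both (\ref{8}) and (\ref{9}) fall out as indicated, which is why the authors can defer to the almost identical arguments in \cite{wang12} and \cite{wang9}.
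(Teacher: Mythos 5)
Your proposal is correct and follows essentially the same route as the paper, which omits the proof and defers to \cite[Theorem 4.1]{wang12} and \cite[Theorem 2.1]{wang9}: those arguments use exactly this boundary-straightening substitution $y=x/h(t)$, uniform parabolic $L^p$/Schauder estimates on time slabs of fixed length (valid since $h_0\le h\le h_\infty$ and $0<h'\le\rho K$ keep the transformed problem uniformly parabolic with bounded coefficients), and the Barbalat-type combination of $\int_0^\infty h'\,dt=h_\infty-h_0<\infty$ with the uniform H\"older continuity of $h'$ to get $h'(t)\to0$. You also correctly flag and resolve the only delicate point, namely that the H\"older regularity of the advection coefficient $yh'/h$ needed for the Schauder step must first be recovered from the $L^p$ stage.
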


According to Proposition \ref{lm2} and \cite[Proposition 3.1]{wang11}, we obtain the following theorem directly.

\begin{theorem}\label{th2}
Let $(u,v,h)$ be the solution of problem {\upshape(\ref{Q})}. If $h_{\infty}<\infty$, then
\begin{equation}\label{10}
\lim\limits_{t\rightarrow\infty}\max\limits_{0\leq x\leq h(t)}u(t,x)=0.
\end{equation}
\end{theorem}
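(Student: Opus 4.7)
The plan is to argue by contradiction via the standard compactness-plus-Hopf-lemma strategy for free boundary problems. Assume (\ref{10}) fails; then there exist $\delta > 0$ and sequences $t_n \to \infty$, $x_n \in [0, h(t_n)]$ with $u(t_n, x_n) \ge \delta$. The uniform $C^1$ bound (\ref{8}) together with $u(t_n, h(t_n)) = 0$ gives $u(t_n, x_n) \le M(h(t_n) - x_n)$, so $h_\infty - x_n \ge \delta/M + o(1)$; after passing to a subsequence we may therefore assume $x_n \to x_0 \in [0, h_\infty)$, staying strictly inside the limiting domain.

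Next I would time-translate. Set $u_n(t,x) = u(t + t_n, x)$, $v_n(t,x) = v(t + t_n, x)$. To handle the moving boundary uniformly, flatten it by the substitution $y = x / h(t + t_n)$, which transforms the system onto the fixed strip $\mathbb{R} \times [0,1]$. Proposition \ref{lm2} provides a uniform $C^1$ bound and yields $h'(t + t_n) \to 0$, so the coefficients of the transformed system remain uniformly bounded. Standard interior and boundary $L^p$ and Schauder estimates then deliver uniform $C^{1+\alpha/2,\,2+\alpha}$ bounds, and an Arzel\`a--Ascoli diagonal extraction produces a subsequence converging in $C^{1,2}_{\mathrm{loc}}$ to a limit $(\tilde u, \tilde v)$ on $\mathbb{R} \times [0, h_\infty]$ that satisfies the original ratio-dependent system on $\mathbb{R} \times (0, h_\infty)$, the Neumann conditions $\tilde u_x(t, 0) = \tilde v_x(t, 0) = 0$, the Dirichlet conditions $\tilde u(t, h_\infty) = \tilde v(t, h_\infty) = 0$, and $\tilde u(0, x_0) \ge \delta$.

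The decisive input is (\ref{9}): since $h'(t) = -\rho u_x(t, h(t)) \to 0$, the limit satisfies $\tilde u_x(t, h_\infty) = 0$ for every $t \in \mathbb{R}$. Rewriting the equation for $\tilde u$ in the form $\tilde u_t - \tilde u_{xx} = a(t,x) \tilde u$ with $a(t,x) = \lambda - \tilde u - b\tilde v / (\tilde u + m\tilde v)$, the coefficient $a$ is bounded because $|b\tilde v / (\tilde u + m\tilde v)| \le b/m$, so the ratio-dependent nonlinearity introduces no singularity at the vanishing boundary. Thus $\tilde u$ is a nonnegative, nontrivial solution of a linear parabolic equation with bounded coefficients that vanishes together with its normal derivative at $x = h_\infty$, contradicting the Hopf boundary point lemma (applied forward from $t = 0$ where $\tilde u(0, x_0) \ge \delta > 0$ propagates positivity).

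The main obstacle is justifying the uniform regularity of $(u_n, v_n)$ up to the moving free boundary so that both the limit PDE and the boundary derivative identity are rigorously inherited. The flattening change of variable, combined with the decay $h'(t) \to 0$ from Proposition \ref{lm2}, reduces this to a classical uniformly parabolic problem with bounded coefficients on a fixed cylinder, after which standard Schauder theory applies and the boundary-derivative convergence $u_x(t + t_n, h(t+t_n)) \to \tilde u_x(t, h_\infty)$ follows from the $C^{1+\alpha/2,\,2+\alpha}$ estimates.
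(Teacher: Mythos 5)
Your argument is correct and is, in substance, the standard proof of exactly the result the paper leans on: the paper deduces Theorem \ref{th2} ``directly'' from Proposition \ref{lm2} together with \cite[Proposition 3.1]{wang11}, and the proof of that cited proposition is precisely your scheme --- the $C^1$ bound \eqref{8} keeps the contradiction points $x_n$ in a compact subset of $[0,h_\infty)$, time translation plus parabolic estimates give a limit problem on $\mathbb{R}\times(0,h_\infty)$, and \eqref{9} forces $\tilde u_x(t,h_\infty)=0$, contradicting the Hopf boundary point lemma once the strong maximum principle propagates $\tilde u(0,x_0)\ge\delta$ forward in time. So you have written out in full the argument the paper outsources to its reference; I see no gap beyond the routine bootstrap needed to justify the uniform Schauder bounds up to the flattened boundary, which you correctly identify as the only technical point.
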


This theorem tells us that if the prey $u$ can not spread to the whole space, then they will be vanished.

Now we discuss the long time behaviors of $v$ when $h_{\infty}<\infty$.
\begin{theorem}
Assume that $h_{\infty}<\infty$.\\
{\upshape(i)}If $h_{\infty}\leq\frac{\pi}{2}\sqrt{{d}/{\mu}}$, then
\begin{equation}\label{37}
\lim\limits_{t\rightarrow\infty}\max\limits_{0\leq x\leq h(t)}v(t,x)=0.
\end{equation}
{\upshape(ii)} If $h_{\infty}>\frac{\pi}{2}\sqrt{d/{{\mu}}}$, then
\begin{equation}\label{38}
\lim\limits_{t\rightarrow\infty}\max\limits_{0\leq x\leq h(t)}|v(t,x)-V(x)|=0,
\end{equation}
where $V(x)$ is the unique positive solution of
\begin{equation}\label{39}
\left\{
\begin{array}{ll}
-dV_{xx}=V({\mu}-V),   &0<x<h_{\infty},\\
V_x(0)=V(h_{\infty})=0.
\end{array}
\right.
\end{equation}
\end{theorem}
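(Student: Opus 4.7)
The plan is to exploit Theorem \ref{th2}, which yields $u(t,\cdot)\to 0$ uniformly, in order to reduce the equation for $v$ asymptotically to the classical logistic equation $v_t-dv_{xx}=\mu v-v^2$, and then to sandwich $v$ between solutions of perturbed logistic problems on the fixed interval $[0,h_\infty]$ and on subintervals $[0,h_1]$ with $h_1<h_\infty$. The elementary inequality $0\le \frac{uv}{u+mv}\le \frac{u}{m}$ ensures that, given any $\epsilon>0$, one can choose $T_\epsilon$ so large that the coupling term in the $v$-equation is pointwise bounded by $c\epsilon/m$ on $[0,h(t)]$ for every $t\geq T_\epsilon$.

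For the upper bound I introduce, on the fixed interval $[0,h_\infty]$, the auxiliary parabolic problem
$$
\bar v^\epsilon_t-d\bar v^\epsilon_{xx}=\mu \bar v^\epsilon-(\bar v^\epsilon)^2+\tfrac{c\epsilon}{m},\qquad \bar v^\epsilon_x(t,0)=\bar v^\epsilon(t,h_\infty)=0,
$$
starting from a constant initial datum at $t=T_\epsilon$ that dominates $\|v(T_\epsilon,\cdot)\|_{L^\infty}$. Since $h(t)<h_\infty$ and $v(t,h(t))=0\le \bar v^\epsilon(t,h(t))$, parabolic comparison on the moving domain yields $v\le \bar v^\epsilon$ on $[0,h(t)]$ for all $t\geq T_\epsilon$. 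Standard long-time theory for this logistic problem with small positive forcing then gives $\bar v^\epsilon(t,\cdot)\to V^\epsilon$ uniformly, where $V^\epsilon$ is the unique positive steady state of the corresponding elliptic problem.

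The principal eigenvalue of $-d\partial_{xx}$ on $[0,h_\infty]$ with the mixed Neumann--Dirichlet conditions equals $d\pi^2/(4h_\infty^2)$, which dominates (resp.\ is dominated by) $\mu$ precisely when $h_\infty\le\tfrac{\pi}{2}\sqrt{d/\mu}$ (resp.\ $h_\infty>\tfrac{\pi}{2}\sqrt{d/\mu}$). In case (i), the unperturbed ($\epsilon=0$) elliptic problem admits only the trivial nonnegative solution; a compactness argument (a constant upper barrier bounds $\{V^\epsilon\}$ uniformly, elliptic regularity then extracts a $C^1$-subsequential limit that must solve the $\epsilon=0$ problem) forces $V^\epsilon\to 0$ uniformly as $\epsilon\to 0^+$, which combined with the upper bound yields (\ref{37}). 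In case (ii) the same compactness argument gives $V^\epsilon\to V$ uniformly in $C^1$, hence $\limsup_{t\to\infty}v(t,x)\le V(x)$. For a matching lower bound, fix $h_1\in(\tfrac{\pi}{2}\sqrt{d/\mu},h_\infty)$; since $h(t)\uparrow h_\infty$ there exists $T_1$ with $h(t)>h_1$ for $t\geq T_1$. Dropping the nonnegative coupling term, $v$ is a supersolution on $[0,h_1]\times[T_1,\infty)$ of the classical logistic equation with Neumann at $0$ and Dirichlet at $h_1$; comparison with the parabolic solution started from small positive data dominated by $v(T_1,\cdot)$ yields $\liminf_{t\to\infty}v(t,x)\geq W_{h_1}(x)$ on $[0,h_1]$, where $W_{h_1}$ is the unique positive steady state on $[0,h_1]$. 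Continuous dependence on the length parameter gives $W_{h_1}\to V$ uniformly on $[0,h_1]$ as $h_1\uparrow h_\infty$.

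The hardest step I anticipate is upgrading the convergence from fixed subintervals to the entire moving interval $[0,h(t)]$ demanded by (\ref{38}). For this the $C^1$ bound of Proposition \ref{lm2} is essential: the estimate $\|v_x(t,\cdot)\|_{L^\infty}\le M$ combined with $v(t,h(t))=0$ gives $v(t,x)\le M(h(t)-x)$, so $v$ is uniformly small on the boundary layer $[h_\infty-\sigma,h(t)]$ once $\sigma$ is small (using $h(t)\le h_\infty$). Since $V$ is continuous with $V(h_\infty)=0$, $V$ is also uniformly small there. Thus, for any prescribed accuracy, I will first choose $\sigma$ small to control the thin layer, then pick $h_1\in(h_\infty-\sigma,h_\infty)$ and $\epsilon$ small to make $|v-V|$ small on $[0,h_1]$ via the sandwich above, and finally take $t$ large so that $h(t)>h_1$. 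This closes the estimate on $[0,h(t)]$ and yields (\ref{38}).
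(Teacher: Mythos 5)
Your proposal is correct and follows essentially the same strategy as the paper: use $u\to 0$ to sandwich $v$ between a perturbed logistic problem on the fixed interval $[0,h_\infty]$ from above and the unperturbed logistic problem on $[0,h_1]$, $h_1<h_\infty$, from below, identify the limiting steady states via the eigenvalue threshold $d\pi^2/(4h_\infty^2)$ versus $\mu$, and then invoke the uniform $C^1$ bound of Proposition \ref{lm2} together with $v(t,h(t))=0$ to control the layer near the free boundary. The only differences are cosmetic: you majorize the coupling term by the constant $c\epsilon/m$ where the paper keeps the $w$-dependent term $c\delta w/(\delta+mw)$, and you phrase the boundary-layer step as a direct estimate rather than the paper's argument by contradiction along a sequence $(t_j,x_j)$.
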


\begin{proof}
Case (i), $h_{\infty}\leq\frac{\pi}{2}\sqrt{{d}/{\mu}}$. Define
$$Q_{\tau}^h:=\{(t,x):t\geq \tau, 0\leq x\leq h(t)\}.$$
By (\ref{10}), for any fixed $0<\delta\ll1$, there exists $T^{\delta}>0$, such that when $(t,x)\in Q_{T^{\delta}}^h$, we have $u<\delta$.
Let $w_{\delta}(t,x)$ be the unique solution of
\begin{equation}\label{40}
\left\{
\begin{array}{ll}
w_t-d w_{xx}={\mu} w-w^2+\frac{c\delta w}{\delta+mw}, &t>T^{\delta},~0<x<h_{\infty},\\
w_x(t,0)=w(t,h_{\infty})=0,&t\geq T^{\delta}, \\
w(T^{\delta},x)=\varpi(x),&0\leq x\leq h_{\infty},
\end{array}
\right.
\end{equation}
where
$$
\varpi(x)=
\left\{
\begin{array}{ll}
v(T^{\delta},x),~~&0\leq x \leq h(T^{\delta}),\\
0,       &h(T^{\delta})<x\leq h_{\infty}.
\end{array}
\right.
$$
Then we have
\begin{equation}\label{4}
v(t,x)\leq w^{\delta}(t,x)~~in~Q_{T^{\delta}}^h
\end{equation}
by the comparison principle. Denote $\lim\limits_{\delta\rightarrow0}T^{\delta}=T^0$, and then the limiting form of problem (\ref{40}) as $\delta\rightarrow0$ becomes
\begin{equation}\label{72}
\begin{cases}
w_t-d w_{xx}={\mu} w-w^2, &t>T^{0},~0<x<h_{\infty},\\
w_x(t,0)=w(t,h_{\infty})=0,&t\geq T^{0}, \\
w(T^{0},x)=\varpi^0(x),&0\leq x\leq h_{\infty},
\end{cases}
\end{equation}
where
$$
\varpi^0(x)=
\left\{
\begin{array}{ll}
v(T^{0},x),~~&0\leq x \leq h(T^{0}),\\
0,       &h(T^{0})<x\leq h_{\infty}.
\end{array}
\right.
$$
We denote the solution of problem (\ref{72}) as $\tilde w(t,x)$. Then by the comparison principle, we have $v(t,x)\leq \tilde w(t,x)$ in $Q_{T^{0}}^h$ as $\delta\rightarrow0$.
As is well-known, if $h_{\infty}\leq\frac{\pi}{2}\sqrt{{d}/{{\mu}}}$, then problem (\ref{72}) has no positive equilibrium, which follows that $\lim\limits_{t\rightarrow\infty}\tilde w(t,x)\rightarrow0$ uniformly on any compact subset of $[0,h_{\infty})$. Thus, we have
$$
\limsup\limits_{t\rightarrow\infty}v(t,x)\leq0~~uniformly~in~any~compact~subset~of~ [0,h_{\infty}).
$$
Remember $v(t,x)\geq0$, so we have
\begin{equation}\label{3}
\lim\limits_{t\rightarrow\infty}v(t,x)=0~~uniformly~in~any~compact~subset~of~ [0,h_{\infty}).
\end{equation}

We assert (\ref{37}) is true. Otherwise there exists a constant $\sigma>0$ and a sequence $\{(t_j,x_j)\}_{j=1}^\infty$ with $0\leq x_j\leq h(t_j)$ and $t_j\rightarrow\infty$ as $j\rightarrow\infty$, such that
\begin{equation}\label{2}
v(t_j,x_j)\geq\sigma,~~j=1,2,3,....
\end{equation}
Since $0\leq x_j<h_{\infty}$, there exist a subsequence of $\{x_j\}$, denoted by itself, and $x_0\in[0,h_{\infty}]$, such that $x_j\rightarrow x_0$ as $j\rightarrow\infty$. The formulas (\ref{2}) and (\ref{3}) imply that $x_0=h_{\infty}$, i.e., $x_j-h(t_j)\rightarrow0$ as $j\rightarrow\infty$. By use of (\ref{2}) firstly and (\ref{8}) secondly, there exists $\overline x_j\in(x_j,h(t_j))$ such that
$$
\left|\frac{\sigma}{x_j-h(t_j)}\right|\leq\left|\frac{v(t_j,x_j)}{x_j-h(t_j)}\right|=\left|\frac{v(t_j,x_j)-v(t_j,h(t_j))}{x_j-h(t_j)}\right|=|v_x(t_j,\overline x_j)\leq M,
$$
which is a contradiction, for $x_j-h(t_j)\rightarrow0$. Hence, (\ref{37}) hold.

Case (ii), $h_{\infty}>\frac{\pi}{2}\sqrt{d/{{\mu}}}$. By \cite[Corollary 3.4]{kjstx23}, when $h_{\infty}>\frac{\pi}{2}\sqrt{{d}/({\mu}+c)}$, the solution $w_\delta(t,x)$ of (\ref{40}) satisfies $\lim\limits_{t\rightarrow\infty}w_{\delta}(t,x)\rightarrow V_{\delta}(x)$ uniformly on any compact subset of $[0,h_{\infty})$, where $V_{\delta}(x)$ is the unique positive solution of
\begin{equation}\label{41}
\left\{
\begin{array}{ll}
-d w_{xx}={\mu} w-w^2+\frac{c\delta w}{\delta+mw}, &0<x<h_{\infty},\\
w_x(0)=w(h_{\infty})=0. \\
\end{array}
\right.
\end{equation}
Letting $\delta\rightarrow0$, by use of the continuity of $V_{\delta}(x)$ in $\delta$, it follows that $V_{\delta}(x)\rightarrow V(x)$, where $V(x)$ is defined by (\ref{39}).
Meanwhile, the limiting form of problem (\ref{41}) as $\delta\rightarrow0$  becomes problem (\ref{39}). As is well-known, if $h_{\infty}>\frac{\pi}{2}\sqrt{{d}/{{\mu}}}$, then problem (\ref{39}) has a unique positive solution $V(x)$. Combined with (\ref{4}), when $h_{\infty}>\frac{\pi}{2}\sqrt{{d}/{\mu}}(>\frac{\pi}{2}\sqrt{{d}/{\mu+c}})$, we have
\begin{equation}\label{73}
\limsup\limits_{t\rightarrow\infty} v(t,x)\leq V(x)~uniformly~on~any~compact~subset~of~ [0,h_{\infty}).
\end{equation}

On the other hand, take $\varepsilon>0$ so small that $h_{\infty}-\varepsilon>\max\{\frac{\pi}{2}\sqrt{d/{{\mu}}},h_0\}$. So there exists $T>0$ such that $h(t)>h_{\infty}-\varepsilon$ for all $t>T$. Let $V_{\varepsilon}(t,x)$ be the unique positive solution of
$$
\left\{
\begin{array}{ll}
V_t-dV_{xx}=V({\mu}-V),   &t>T,~0<x<h_{\infty}-\varepsilon,\\
V_x(t,0)=V(t,h_{\infty}-\varepsilon)=0,&  t>T,\\
V(T,x)=v(T,x),                        &0\leq x\leq h_{\infty}-\varepsilon.
\end{array}
\right.
$$
By the comparison principle we have $v(t,x)\geq V_{\varepsilon}(t,x)$ for $t>T$ and $x\in[0,h_{\infty}-\varepsilon]$. Since $h_{\infty}-\varepsilon>\frac{\pi}{2}\sqrt{d/{{\mu}}}$, it is well known that $\lim\limits_{t\rightarrow\infty}V_{\varepsilon}(t,x)=V_{\varepsilon}(x)$ uniformly on $[0,h_{\infty}-\varepsilon]$, where $V_{\varepsilon}(x)$ is the unique positive solution of
$$
\left\{
\begin{array}{ll}
-dV_{xx}=V({\mu}-V),   &0<x<h_{\infty}-\varepsilon,\\
V_x(0)=V(h_{\infty}-\varepsilon)=0.
\end{array}
\right.
$$
Therefore, we have $\liminf\limits_{t\rightarrow\infty}v(t,x)\geq V_{\varepsilon}(x)$ uniformly on $[0,h_{\infty}-\varepsilon]$. By using the continuity of $V_{\varepsilon}$ in $\varepsilon$ and letting $\varepsilon\rightarrow 0$, we get
\begin{equation}\label{42}
\liminf\limits_{t\rightarrow\infty}v(t,x)\geq V(x)~~uniformly~on~any~compact~subset~of~ [0,h_{\infty}),
\end{equation}
where $V(x)$ is defined by (\ref{39}). Combined (\ref{42}) with (\ref{73}) deduces that
\begin{equation}\label{60}
\lim\limits_{t\rightarrow\infty} v(t,x)=V(x)~~uniformly~on~any~compact~subset~of~ [0,h_{\infty}).
\end{equation}

Now we prove (\ref{38}). On the contrary we assume that (\ref{38}) is not true. Then there exists a constant $\sigma>0$ and a sequence $\{(t_j,x_j)\}_{j=1}^{\infty}$ with $0\leq x_j<h(t_j)$ and $t_j\rightarrow\infty$ as $j\rightarrow\infty$, such that
\begin{equation}\label{59}
|v(t_j,x_j)-V(x_j)|\geq 2\sigma,~~~~j=1,2,...
\end{equation}
For $0\leq x_j<h_{\infty}$, there exists a subsequence of $\{x_j\}$, denoted by itself, and $x_0\in[0,h_{\infty}]$, such that $x_j\rightarrow x_0$ as $j\rightarrow\infty$. It follows from (\ref{60}) and (\ref{59}) that $x_0=h_{\infty}$, i.e., $x_j-h(t_j)\rightarrow 0$ as $j\rightarrow\infty$. Since $V(x)$ is continuous in $[0,h_{\infty}]$, $V(h_{\infty})=0$ and $x_j\rightarrow\infty$, by (\ref{59}) we have
$|v(t_j,x_j)|\geq\sigma$ for all $j>1$. Similar to the proof of (\ref{37}), we can get a contradiction.
\end{proof}

\subsection{\large{Spreading case $h_{\infty}=\infty$}}

\begin{theorem}\label{th3}
Assume that $m\lambda>b$ and $h_{\infty}=\infty$. Then the solution $(u,v)$ of problem (\ref{Q}) satisfies
\begin{equation}\label{70}
\left\{
\begin{array}{ll}
{\lambda} -u-\frac{bv}{u+mv}=0,\\
{\mu} -v+\frac{cu}{u+mv}=0.
\end{array}
\right.
\end{equation}
Furthermore, if $0<m{\lambda}-b<b{\mu}/c$, then
$$\lim\limits_{t\rightarrow\infty}u(t,x)=u^*:=\frac{A+\sqrt{\Delta_1}}{2(b +c m^2)},~\lim\limits_{t\rightarrow\infty}v(t,x)=v^*:=\frac{u^*({\lambda}-u^*)}{b-m({\lambda}-u^*)},$$
where $A={\lambda}(2cm^2+b)-mb({\mu}+2c)$,
$\Delta_1=A^2+4(b+cm^2)[b({\mu}+c)-mc{\lambda}](m{\lambda}-b)$.
\end{theorem}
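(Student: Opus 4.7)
The plan is to prove Theorem 2.4 by a standard monotone-iteration argument. Since $h_{\infty} = \infty$, for every fixed $L > 0$ there is a time $T_L$ with $h(t) > L$ for $t > T_L$, so parabolic comparison on $[0,L]$ is available once $t$ is large enough; the final step will be to let $L \to \infty$. The strategy is to construct two monotone sequences of constants that sandwich $u$ and $v$ and converge to the positive equilibrium of (\ref{70}), and then solve that algebraic system explicitly under the sharpened hypothesis.

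First I would produce global upper bounds: discarding $\tfrac{buv}{u+mv} \ge 0$ gives $u_t - u_{xx} \le \lambda u - u^2$, and the spatially constant supersolution of the logistic ODE yields $\limsup_{t \to \infty} u(t,x) \le \lambda =: M_1$ uniformly in $x$; likewise $\tfrac{cu}{u+mv} \le c$ yields $\limsup v \le \mu + c =: N_1$. The hypothesis $m\lambda > b$ is decisive for a positive lower bound: $\tfrac{bv}{u+mv} \le \tfrac{b}{m}$ gives $u_t - u_{xx} \ge (\lambda - \tfrac{b}{m})u - u^2$ with positive coefficient $(m\lambda - b)/m$, so comparing with the positive Dirichlet stationary solution on $[0,L]$ and letting $L \to \infty$ yields $\liminf u \ge (m\lambda - b)/m =: m_1 > 0$ on compact subsets. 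Feeding $u \ge m_1$ and $v \le N_1$ into the $v$-equation produces $v_t - d v_{xx} \ge \bigl(\mu + \tfrac{c m_1}{m_1 + m N_1}\bigr) v - v^2$, and the analogous Dirichlet comparison gives $\liminf v \ge \mu + \tfrac{c m_1}{m_1 + m N_1} =: n_1 > 0$.

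Iterating this procedure produces monotone sequences $M_k \searrow M^{\infty}$, $m_k \nearrow m^{\infty}$, $N_k \searrow N^{\infty}$, $n_k \nearrow n^{\infty}$ governed by
$$ M_{k+1} = \lambda - \frac{b\, n_k}{M_k + m\, n_k}, \qquad m_{k+1} = \lambda - \frac{b\, N_k}{m_k + m\, N_k}, $$
$$ N_{k+1} = \mu + \frac{c\, M_k}{M_k + m\, n_k}, \qquad n_{k+1} = \mu + \frac{c\, m_k}{m_k + m\, N_k}, $$
with $m_k \le u(t,x) \le M_k$ and $n_k \le v(t,x) \le N_k$ valid eventually on each $[0,L]$. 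The four limits satisfy a fixed-point system. Subtracting paired equations and clearing denominators, the increments $\xi := M^{\infty} - m^{\infty} \ge 0$ and $\eta := N^{\infty} - n^{\infty} \ge 0$ satisfy $\xi = bT$ and $\eta = cmT$ with $T := (M^{\infty}\eta + n^{\infty}\xi)/[(M^{\infty}+mn^{\infty})(m^{\infty}+mN^{\infty})]$; using the four fixed-point identities to simplify, the only way $T$ can be nonzero reduces to the compatibility constraint $M^{\infty} + m^{\infty} = \lambda - m\mu$, which a direct check (using the established positivity and $m\lambda > b$) rules out. Hence $\xi = \eta = 0$, so $u \to u^* := M^{\infty}$ and $v \to v^* := N^{\infty}$ uniformly on compact subsets of $[0,\infty)$, and $(u^*, v^*)$ satisfies (\ref{70}).

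For the explicit formula, rearrange the first equation of (\ref{70}) as $v^* = u^*(\lambda - u^*)/[b - m(\lambda - u^*)]$ (the denominator being positive in the admissible range because $u^* > \lambda - b/m$). The identity $u^* + m v^* = u^* b/[b - m(\lambda - u^*)]$ then reduces the second equation to the quadratic $(b + cm^2)(u^*)^2 - A u^* - [b(\mu+c) - mc\lambda](m\lambda - b) = 0$, with $A = \lambda(2cm^2 + b) - mb(\mu + 2c)$. The hypothesis $m\lambda - b < b\mu/c$ is equivalent to $b(\mu+c) - mc\lambda > 0$, making the constant term negative, so the product of roots is negative and the unique positive root is $u^* = (A + \sqrt{\Delta_1})/[2(b + cm^2)]$ with $\Delta_1 = A^2 + 4(b+cm^2)[b(\mu+c) - mc\lambda](m\lambda - b)$; the value of $v^*$ then follows by substitution. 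The main obstacle I anticipate is the algebraic step that forces the iteration's limits to be symmetric, combined with the technical bookkeeping needed to keep all the comparison bounds uniform on compact subsets as $L \to \infty$; the rational, non-monotone character of $bv/(u+mv)$ as a function of $(u,v)$ requires careful sign tracking throughout.
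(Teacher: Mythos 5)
Your overall strategy is the same as the paper's: squeeze $u$ and $v$ between improving constant bounds obtained by comparison on expanding intervals (the paper does this via its Proposition A.1), pass to the limit of the resulting monotone sequences, and then solve the limiting algebraic system, whose explicit resolution (the quadratic $(b+cm^2)(u^*)^2-Au^*-[b(\mu+c)-mc\lambda](m\lambda-b)=0$ and the sign argument from $m\lambda-b<b\mu/c$) you carry out correctly and identically to the paper. The one structural difference is how each iteration step treats the ratio term: you freeze \emph{both} arguments of $\tfrac{uv}{u+mv}$ at their current bounds, whereas the paper keeps the dependence on the variable being estimated, factors the resulting cubic, and reads off the new bound as the positive root of a quadratic (the maps $\phi$, $\psi$); the paper's scheme has the cleaner alternating structure $\overline v_i=\psi(\overline u_i)$, $\underline u_i=\phi(\overline v_i)$, $\underline v_i=\psi(\underline u_i)$, $\overline u_{i+1}=\phi(\underline v_i)$, while yours produces a genuinely four-variable fixed-point system.

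The genuine gap is in your exclusion of the degenerate fixed point. Your algebra up to that point is in fact correct: writing $\xi=M^{\infty}-m^{\infty}$, $\eta=N^{\infty}-n^{\infty}$, one does get $\xi=bT$ and $\eta=cmT$ with a common $T=(M^{\infty}N^{\infty}-m^{\infty}n^{\infty})/D\ge 0$, and if $T>0$ the fixed-point identities do reduce to $M^{\infty}+m^{\infty}=\lambda-m\mu$. But your claim that ``a direct check (using the established positivity and $m\lambda>b$) rules this out'' does not hold under the stated hypotheses. The only a priori information available is $\lambda-b/m<m^{\infty}\le M^{\infty}<\lambda$, which gives $M^{\infty}+m^{\infty}>2(\lambda-b/m)$; this exceeds $\lambda-m\mu$ only when $\lambda+m\mu\ge 2b/m$, and that is not implied by $m\lambda>b$ together with $0<m\lambda-b<b\mu/c$ (e.g.\ $m=b=1$, $\lambda=1.2$, $\mu=0.5$, $c=1$ gives $\lambda-m\mu=0.7$ while the lower bound is only $0.4$). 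So the central step — that the upper and lower limits coincide — is not established by your argument; you would need either finer a priori bounds on $M^{\infty},m^{\infty}$ or an additional parameter restriction. In fairness, the paper's own proof asserts the analogous collapse ($\overline u=\underline u$, $\overline v=\underline v$) from its system (2.24)--(2.25) with no justification at all, so you have at least made the obstruction explicit; but as written your ``direct check'' is the step that would fail.
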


\begin{proof}
The proof uses the iteration method.

Step 1.~~For any fixed $L\gg1$ and $0<\varepsilon\ll1$, let $l_{\varepsilon}$ be given by Proposition \ref{p1} with $d=1,~\beta={\lambda}$ and $f(u)\equiv1$. Taking accounting to $h_{\infty}=\infty$, we can find $T_1>0$ such that $h(t)\geq l_{\varepsilon}$ when $t>T_1$. In view of $u\geq0,v\geq0$, we have
$$
\left\{
\begin{array}{ll}
u_t-u_{xx}\leq u({\lambda}-u),   &t>T_1,~0<x<l_{\varepsilon},\\
u_x(t,0)=0,~u(t,l_{\varepsilon})\leq K,&t>T_1,
\end{array}
\right.
$$
where $K$ defined by Theorem \ref{th1}.
Since $u(T_1,x)>0$ for $[0,l_{\varepsilon}]$, by using Proposition \ref{p1}, we have
$$
\limsup\limits_{t\rightarrow\infty}u(t,x)\leq{\lambda}+\varepsilon ~uniformly~on~[0,L].
$$
The arbitrariness of $\varepsilon$ and $L$ implies that
\begin{equation}\label{17}
\limsup\limits_{t\rightarrow\infty}u(t,x)\leq{\lambda}:=\overline u_1
\end{equation}
 uniformly on any compact subset of $[0,\infty)$.

Step 2.~~For any fixed $L\gg1$, $0<\delta\ll1$ and $0<\varepsilon\ll1$, let $l_{\varepsilon}$ be given by Proposition \ref{p1} with $\beta=v_1^{\delta}$ and $f(v)={m(v-v_2^{\delta})}/[{(\overline u_1+\delta)+m v}]$. Thanks to (\ref{17}), there exists $T_2>T_1$ such that $u(t,x)<\overline u_1+\delta$ for $t>T_2$. Thus
$$
\begin{array}{ll}
v_t-dv_{xx}&\leq {\mu} v-v^2+\frac{c(\overline u_1+\delta)v}{(\overline u_1+\delta)+mv}\\
           &=-v\frac{mv^2-[m{\mu}+(c-1)(\overline u_1+\delta)]v-(\overline u_1+\delta)}{(\overline u_1+\delta)+mv}\\
           &=mv\frac{(v-v_2^{\delta})(v_1^{\delta}-v)}{(\overline u_1+\delta)+m v},
\end{array}
$$
and
$$
v_x(t,0)=0,~v(t,l_{\varepsilon})\leq K, ~~\forall~t>T_2,
$$
where
$$
v_1^{\delta}=\frac{{m{\mu}+(c-1)(\overline u_1+\delta)+\sqrt{[m{\mu}+(c-1)(\overline u_1+\delta)]^2+4m(\overline u_1+\delta){\mu}}}}{2m}>0,
$$

$$
v_2^{\delta}=\frac{{m{\mu}+(c-1)(\overline u_1+\delta)-\sqrt{[m{\mu}+(c-1)(\overline u_1+\delta)]^2+4m(\overline u_1+\delta){\mu}}}}{2m}<0.
$$

According to $v(T_2,x)>0$ for $x\in[0,l_{\varepsilon}]$ and Proposition \ref{p1}, we have $v(t,x)<v_1^{\delta}+\varepsilon$ uniformly for $t>T_2$ and $x\in[0,L]$.
The arbitrariness of $\varepsilon$, $\delta$ and $L$ implies that
\begin{equation}\label{1.9}
\limsup\limits_{t\rightarrow\infty}v(t,x)\leq v_1^0:=\overline v_1
\end{equation}
uniformly on any compact subset of $[0,\infty)$, where
$$
\overline v_1=\frac{{m{\mu}+(c-1)\overline u_1+\sqrt{[m{\mu}+(c-1)\overline u_1]^2+4m{\mu}\overline u_1}}}{2m}>0.
$$

Step 3.  For given $L\gg1$, $0<\delta\ll1$ and $0<\varepsilon\ll1$, let $l_{\varepsilon}$ be given by Proposition \ref{p1} with $d=1$, $f(u)={(u-u_2^{\delta})}/[{u+m(\overline v_1+\delta)}]$ and $\beta=\underline u_1^{\delta}$. By (\ref{1.9}), there exists $T_3>T_2$ such that $v(t,x)<\overline v_1+\delta$. Thus
$$
\begin{array}{ll}
u_t-u_{xx}&\geq {\lambda} u-u^2-\frac{bu(\overline v_1+\delta)}{u+m(\overline v_1+\delta)}\\
          &=-u\frac{u^2-[{\lambda}-m(\overline v_1+\delta)]u-(m{\lambda}-b)(\overline v_1+\delta)}{u+m(\overline v_1+\delta)}\\
          &=u\frac{(u- u_2^{\delta})( u_1^{\delta}-u)}{u+m(\overline v_1+\delta)}
\end{array}
$$
and
$$u_x(t,0)=0,~u(t,l_{\varepsilon})\geq0,~t>T_3,$$
where
$$
 u_1^{\delta}=\frac{{\lambda}-m(\overline v_1+\delta)+\sqrt{[{\lambda}-m(\overline v_1+\delta)]^2+4(m{\lambda}-b)(\overline v_1+\delta)}}{2}>0,
$$
$$
 u_2^{\delta}=\frac{{\lambda}-m(\overline v_1+\delta)-\sqrt{[{\lambda}-m(\overline v_1+\delta)]^2+4(m{\lambda}-b)(\overline v_1+\delta)}}{2}<0.
$$

According to $u(T_3,x)>0$ for $x\in[0,l_{\varepsilon}]$ and Proposition \ref{p1}, it follows that $u(t,x)>\underline u_1^{\delta}-\varepsilon$ uniformly for $t>T_3$ and $x\in[0,L]$.
The arbitrariness of $\varepsilon$, $\delta$ and $L$ implies that
\begin{equation}\label{20}
\liminf\limits_{t\rightarrow\infty}u(t,x)\geq u_1^0:=\underline u_1>0.
\end{equation}
uniformly on any compact subset of $[0,\infty)$, where
$$
\underline u_1=\frac{{\lambda}-m\overline v_1+\sqrt{({\lambda}-m\overline v_1)^2+4(m{\lambda}-b)\overline v_1}}{2}.
$$

step 4.  For given $L\gg1$, $0<\delta\ll1$ and $0<\varepsilon\ll1$, let $l_{\varepsilon}$ be given by Proposition \ref{p1} with $\beta=v_3^{\delta}$ and $f(v)={m(v-v_4^{\delta})}/[{(\underline u_1-\delta)+m v}]$. By use of (\ref{20}), there exists $T_4>T_3$ such that $u(t,x)>\underline u_1-\delta$ for $t>T_4$. Thus
$$
\begin{array}{ll}
v_t-dv_{xx}&\geq {\mu} v-v^2+\frac{c(\underline u_1-\delta)v}{(\underline u_1-\delta)+m v}\\
           &=-v\frac{mv^2-[m{\mu}+(c-1)(\underline u_1-\delta)]v-(\underline u_1-\delta){\mu}}{(\underline u_1-\delta)+m v}\\
           &=mv\frac{(v-v_4^{\delta})(v_3^{\delta}-v)}{(\underline u_1-\delta)+m v},
\end{array}
$$
and
$$
v_x(t,0)=0,~v(t,l_{\varepsilon})\geq0, ~~\forall~t>T_4,
$$
where
$$
v_3^{\delta}=\frac{{m{\mu}+(c-1)(\underline u_1-\delta)+\sqrt{[m{\mu}+(c-1)(\underline u_1-\delta)]^2+4m{\mu}(\underline u_1-\delta)}}}{2m}>0,
$$

$$
v_4^{\delta}=\frac{{m{\mu}+(c-1)(\underline u_1-\delta)-\sqrt{[m{\mu}+(c-1)(\underline u_1-\delta)]^2+4m{\mu}(\underline u_1-\delta)}}}{2m}<0.
$$
Similar to Step 3, we have
\begin{equation}\label{19}
\liminf\limits_{t\rightarrow\infty}v(t,x)\geq v_3^0:=\underline v_1
\end{equation}
uniformly on any compact subset of $[0,\infty)$, where
$$
\underline v_1=\frac{{m{\mu}+(c-1)\underline u_1+\sqrt{[m{\mu}+(c-1)\underline u_1]^2+4m{\mu}\underline u_1}}}{2m}>0.
$$

Step 5.~~For given $L\gg1$, $0<\delta\ll1$ and $0<\varepsilon\ll1$, let $l_{\varepsilon}$ be given by Proposition \ref{p1} with $\beta=\underline u_3^{\delta}$ and $f(u)={(u-u_4^{\delta})}/[{u+m(\underline v_1-\delta)}]$. By (\ref{19}), there exists $T_5>T_4$ such that $v(t,x)>\underline v_1-\delta$ for $t>T_5$. Thus
$$
\begin{array}{ll}
u_t-u_{xx}&\geq {\lambda} u-u^2-\frac{bu(\underline v_1-\delta)}{u+m(\underline v_1-\delta)}\\
          &=-u\frac{u^2-[{\lambda}-m(\underline v_1-\delta)]u-(m{\lambda}-b)(\underline v_1-\delta)}{u+m(\underline v_1-\delta)}\\
          &=u\frac{(u- u_4^{\delta})( u_3^{\delta}-u)}{u+m(\underline v_1-\delta)}
\end{array}
$$
and
$$u_x(t,0)=0,~u(t,l_{\varepsilon})\geq0,~t>T_5,$$
where
$$
 u_3^{\delta}=\frac{{\lambda}-m(\underline v_1-\delta)+\sqrt{[{\lambda}-m(\underline v_1-\delta)]^2+4(m{\lambda}-b)(\underline v_1-\delta)}}{2}>0,
$$
$$
 u_4^{\delta}=\frac{{\lambda}-m(\underline v_1-\delta)-\sqrt{[{\lambda}-m(\underline v_1-\delta)]^2+4(m{\lambda}-b)(\underline v_1-\delta)}}{2}<0.
$$
Similar to Step 2, we have
\begin{equation}\label{20}
\limsup\limits_{t\rightarrow\infty}u(t,x)\leq u_3^0:=\overline u_2>0
\end{equation}
uniformly on any compact subset of $[0,\infty)$, where
$$
\overline u_2=\frac{{\lambda}-m\underline v_1+\sqrt{({\lambda}-m\underline v_1)^2+4(m{\lambda}-b)\underline v_1}}{2}.
$$

Repeating the above processes, we can find four sequences $\{\overline u_i\}$, $\{\overline v_i\}$, $\{\underline u_i\}$, $\{\underline v_i\}$ such that for all $i$,
$$
\underline u_i\leq\liminf\limits_{t\rightarrow\infty}u(t,x)\leq\limsup\limits_{t\rightarrow\infty}u(t,x)\leq\overline u_i,
$$
$$
\underline v_i\leq\liminf\limits_{t\rightarrow\infty}v(t,x)\leq\limsup\limits_{t\rightarrow\infty}v(t,x)\leq\overline v_i
$$
uniformly on any compact subset of $[0,\infty)$. Moreover, for $s>0$, denote
$$
\phi(s)=\frac{{\lambda}-ms+\sqrt{({\lambda}-ms)^2+4(m{\lambda}-b)s}}{2},$$
$$
\psi(s)=\frac{{m{\mu}+(c-1)s+\sqrt{[m{\mu}+(c-1)s]^2+4m{\mu} s}}}{2m},
$$
then
$$
\overline v_i=\psi(\overline u_i),~\underline u_i=\phi(\overline v_i),~\underline v_i=\psi(\underline u_i),~\overline u_{i+1}=\phi(\underline v_i),~~~i=1,2,3,...
$$
And the sequences $\{\overline u_i\}$, $\{\overline v_i\}$, $\{\underline u_i\}$, $\{\underline v_i\}$ satisfy
$$
\underline {u}_1\leq\cdots\leq\underline u_i\leq\liminf\limits_{t\rightarrow\infty} u(t,x)\leq\limsup\limits_{t\rightarrow\infty} u(t,x)\leq\cdots\leq\overline u_i\leq\cdots\leq\overline u_1,
$$
$$
\underline v_1\leq\cdots\leq\underline v_i\leq\liminf\limits_{t\rightarrow\infty} v(t,x)\leq\limsup\limits_{t\rightarrow\infty} v(t,x)\leq\cdots\leq\overline v_i\leq\cdots\leq\overline v_1,
$$
which indicates that $\{\overline u_i\}$ and $\{\overline v_i\}$ are monotonically non-increasing with lower boundaries and $\{\underline u_i\}$ and $\{\underline v_i\}$ are monotonically non-decreasing with upper boundaries. Then $\{\overline u_i\}$, $\{\overline v_i\}$, $\{\underline u_i\}$, $\{\underline v_i\}$ have limits denoted by $\overline u$, $\overline v$, $\underline u$, $\underline v$, respectively, as $i\rightarrow\infty$. Furthermore, they satisfy
$$
\overline v=\psi(\overline u),~\underline u=\phi(\overline v),~\underline v=\psi(\underline u),~\overline u=\phi(\underline v),
$$
which follows that
\begin{equation}\label{24}
{\lambda}-\underline u-\frac{b\overline v}{\underline u+m\overline v}=0,~{\lambda}-\overline u-\frac{b\underline v}{\overline u+m\underline v}=0,
\end{equation}
\begin{equation}\label{25}
{\mu}-\overline v+\frac{c\overline u}{\overline u+m\overline v}=0,~{\mu}-\underline v+\frac{c\underline u}{\underline u+m\underline v}=0.
\end{equation}
So $\overline u=\underline u=:u^*$, $\overline v=\underline v=:v^*$, and $(u^*,v^*)$ satisfies (\ref{70}).

By the first equation of (\ref{70}), we have
\begin{equation}\label{71}
v=\frac{u({\lambda}-u)}{b-m({\lambda}-u)}.
\end{equation}
Substituting (\ref{71}) into the second equation of (\ref{70}), we have
$$
(b+cm^2)u^2-[{\lambda}(2cm^2+b)-mb({\mu}+2c)]u-[b({\mu}+c)-mc{\lambda}](m{\lambda}-b)=0.
$$
The assumption $m{\lambda}-b<b{\mu}/c$ implies $b({\mu}+c)-mc{\lambda}>0$.
Then $$u_1=\frac{A+\sqrt{\Delta_1}}{2(b+cm^2)}>0,~u_2=\frac{A-\sqrt{\Delta_1}}{2(b+cm^2)}<0.$$
So $u^*=u_1$ and $v^*=\frac{u^*({\lambda}-u^*)}{b-m({\lambda}-u^*)}$.
Thus, when $0<m{\lambda}-b<b{\mu}/c$, we get
$$
(u^*,v^*):=(\frac{A+\sqrt{\Delta_1}}{2(b +c m^2)},\frac{u^*({\lambda}-u^*)}{b-m({\lambda}-u^*)}).
$$
\end{proof}

\section{\large{The spreading-vanishing dichotomy and the criteria of spreading and vanishing}}\large
In this section, we want to establish the spreading-vanishing dichotomy and study the criteria of spreading and vanishing of problem (\ref{Q}). We first study an eigenvalue problem.

For any given $l>0$, let $\sigma_1(l)$ be the first eigenvalue of
\begin{equation}\label{81}
\left\{
\begin{array}{ll}
-\phi_{xx}-({\lambda}-\frac{b}{m})\phi=\sigma\phi, ~~0<x<l,\\
\phi_x(0)=0=\phi(l).
\end{array}
\right.
\end{equation}

\begin{lemma}\label{lm4.1}
Assume that $m{\lambda}>b$. If $h_{\infty}<\infty$, then $\sigma_1(h_{\infty})\geq0$.
\end{lemma}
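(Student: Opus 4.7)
The plan is to argue by contradiction: suppose $\sigma_1(h_\infty)<0$, and deduce that $u$ must remain bounded away from $0$ on some compact subset, which will contradict Theorem \ref{th2} (which already gives $\max_{[0,h(t)]}u(t,\cdot)\to 0$ under the assumption $h_\infty<\infty$). Note that the eigenvalue problem (\ref{81}) has the explicit first eigenvalue $\sigma_1(l)=(\pi/(2l))^2-(\lambda-b/m)$; since $m\lambda>b$, the map $l\mapsto\sigma_1(l)$ is strictly decreasing, so if $\sigma_1(h_\infty)<0$ we may fix a small $\varepsilon>0$ with $\sigma_1(h_\infty-\varepsilon)<0$, and then pick $T>0$ large enough that $h(t)>h_\infty-\varepsilon$ for all $t>T$.

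The key observation for coupling this with the prey equation is the elementary inequality $\dfrac{buv}{u+mv}\le \dfrac{b}{m}u$ for $u,v\ge 0$, which removes the $v$-dependence in an acceptable direction. Using this bound in the equation for $u$ gives
\begin{equation*}
u_t-u_{xx}\;\ge\;\Bigl(\lambda-\tfrac{b}{m}\Bigr)u-u^2,\qquad t>T,\ 0<x<h_\infty-\varepsilon,
\end{equation*}
together with $u_x(t,0)=0$ and $u(t,h_\infty-\varepsilon)\ge 0$. I then compare $u$ on the fixed interval $[0,h_\infty-\varepsilon]$ with the solution $\tilde u$ of the auxiliary logistic problem with effective growth rate $\lambda-b/m>0$, Neumann condition at $x=0$, homogeneous Dirichlet condition at $x=h_\infty-\varepsilon$, and initial datum $\tilde u(T,\cdot)=u(T,\cdot)|_{[0,h_\infty-\varepsilon]}>0$. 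Standard parabolic comparison yields $u(t,x)\ge \tilde u(t,x)$ for $t>T$ on this interval.

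Because $\sigma_1(h_\infty-\varepsilon)<0$, the classical theory of the logistic diffusion equation on a bounded interval guarantees that the stationary problem
\begin{equation*}
-U_{xx}=\Bigl(\lambda-\tfrac{b}{m}\Bigr)U-U^2\ \text{on }(0,h_\infty-\varepsilon),\quad U_x(0)=0,\ U(h_\infty-\varepsilon)=0,
\end{equation*}
admits a unique positive solution $U$, and $\tilde u(t,\cdot)\to U(\cdot)$ uniformly on $[0,h_\infty-\varepsilon]$ as $t\to\infty$. Consequently $\liminf_{t\to\infty}u(t,x)\ge U(x)>0$ for every $x\in[0,h_\infty-\varepsilon)$, directly contradicting Theorem \ref{th2}. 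Hence $\sigma_1(h_\infty)\ge 0$.

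The only delicate point is the comparison step: one must verify that the moving domain is not an obstacle, which is ensured by choosing $T$ so that $h(t)>h_\infty-\varepsilon$, and that the boundary ordering $u(t,h_\infty-\varepsilon)\ge 0=\tilde u(t,h_\infty-\varepsilon)$ is genuine (it is, by positivity of $u$). The rest is a straightforward application of standard facts on the logistic equation, so I expect no serious obstacle beyond packaging these ingredients cleanly.
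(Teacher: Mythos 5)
Your proposal is correct and follows essentially the same route as the paper: argue by contradiction, drop the predation term via $\frac{buv}{u+mv}\le\frac{b}{m}u$, compare $u$ from below with the logistic equation with growth rate $\lambda-b/m$ on a fixed subinterval slightly smaller than $h_\infty$ where the principal eigenvalue is still negative, and conclude that $u$ stays bounded away from zero, contradicting Theorem \ref{th2}. The only cosmetic difference is that the paper fixes the interval $[0,h(T)]$ with $\sigma_1(h(T))<0$ by continuity, whereas you use $[0,h_\infty-\varepsilon]$; both choices serve the same purpose.
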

\begin{proof}
We assume $\sigma_1(h_{\infty})<0$ to get a contradiction. By the continuity of $\sigma_1(l)<0$ in $l$ and $h(t)$ in t, there exists $T\geq1$ such that $\sigma_1(h{(T)})<0$.
Suppose $w(t,x)$ be the unique solution of
$$
\left\{
\begin{array}{ll}
w_t-w_{xx}=w({\lambda}-\frac{b}{m}-w),    &t>T,~0<x<h(T),\\
w_x(t,0)=w(t,h(T))=0,  &t\geq T,\\
w(T,x)=u(T,x),&0\leq x\leq h(T).
\end{array}
\right.
$$
By using the compare principle, we have
$$
w(t,x)\leq u(t,x),~~~t\geq T,~0\leq x\leq h(T).
$$
In view of $\sigma_1(h(T))<0$, it is well known that $w(t,x)\rightarrow w^*(x)$ uniformly in any compact subset of $[0,h(T))$ as $t\rightarrow\infty$, where $w^*$ is the unique positive solution of
\begin{equation}
\left\{
\begin{array}{ll}
-w^*_{xx}=w^*({\lambda}-\frac{b}{m}-w^*),    &0<x<h(T),\\
w^*_x(0)=w^*(h(T))=0,
\end{array}
\right.
\end{equation}
Thus, $\liminf\limits_{t\rightarrow\infty}u(t,x)\geq\lim\limits_{t\rightarrow\infty}w(t,x)=w^*(x)>0$, which contradicts (\ref{10}). The proof is completed.
\end{proof}

Define
$$
\Lambda:=\frac{\pi}{2}\sqrt{\frac{m}{m{\lambda}-b}}.
$$

\begin{theorem}\label{th4.1}
Assume that $m{\lambda}>b$. \\
{\upshape(i)}If $h_{\infty}<\infty$, then $h_{\infty}\leq\Lambda$;\\
{\upshape(ii)}If $h_0\geq\Lambda$, then $h_{\infty}=\infty$ for all ${\rho}>0$.
\end{theorem}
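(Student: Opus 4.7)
The plan is to reduce both parts to the explicit form of the first eigenvalue $\sigma_1(l)$, and then exploit the monotonicity of $h(t)$ (guaranteed by Theorem \ref{th1}) to conclude part (ii) from part (i) via contradiction.

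First I would compute $\sigma_1(l)$ explicitly. The eigenvalue problem (\ref{81}) has constant coefficients, mixed Neumann--Dirichlet boundary conditions, and the well-known first eigenfunction $\phi_1(x)=\cos\bigl(\pi x/(2l)\bigr)$, giving
\[
\sigma_1(l)=\Bigl(\frac{\pi}{2l}\Bigr)^{2}-\Bigl(\lambda-\frac{b}{m}\Bigr)=\Bigl(\frac{\pi}{2l}\Bigr)^{2}-\frac{m\lambda-b}{m}.
\]
Under the hypothesis $m\lambda>b$, this is a strictly decreasing function of $l$ which vanishes precisely at $l=\Lambda$. Hence $\sigma_1(l)\ge 0$ if and only if $l\le \Lambda$.

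For part (i), I would apply Lemma \ref{lm4.1} directly: if $h_\infty<\infty$, then $\sigma_1(h_\infty)\ge 0$, and by the equivalence above this forces $h_\infty\le\Lambda$.

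For part (ii), I would argue by contradiction. Suppose $h_0\ge \Lambda$ but $h_\infty<\infty$ for some $\rho>0$. Theorem \ref{th1} guarantees $h'(t)>0$ for all $t>0$, so $h(t)$ is strictly increasing; in particular $h_\infty>h_0\ge\Lambda$. This directly contradicts the bound $h_\infty\le \Lambda$ just established in (i). Therefore $h_\infty=\infty$ whenever $h_0\ge\Lambda$, independently of $\rho$.

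There is no serious obstacle here: the whole argument is a one-line rearrangement of the eigenvalue inequality together with strict monotonicity of the free boundary. The only point that deserves care is ensuring that the strict inequality $h(t)>h_0$ is genuinely available for every $t>0$, so that the boundary case $h_0=\Lambda$ is covered; this is provided by the lower bound $h'(t)>0$ in Theorem \ref{th1}, which uses the Hopf-type boundary estimate on $u_x(t,h(t))$ implicit in the regularity statement.
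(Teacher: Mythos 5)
Your proposal is correct and follows essentially the same route as the paper: part (i) is the explicit formula $\sigma_1(l)=(\pi/(2l))^2-(\lambda-b/m)$ combined with Lemma \ref{lm4.1} (the paper phrases it as a contradiction, you phrase it directly, which is immaterial), and part (ii) follows from (i) together with the strict monotonicity $h'(t)>0$ from Theorem \ref{th1}. If anything, your write-up is slightly more careful than the paper's, since you make explicit the monotonicity step that the paper compresses into the word ``Accordingly.''
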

\begin{proof}
We assert that (i) is true. Otherwise $\Lambda <h_{\infty}<\infty$.
It is well known that problem (\ref{81}) can be solved explicitly in terms of exponentials by the principle eigenvalue $\sigma_1(l)=-({\lambda}-\frac{b}{m})+(\frac{\pi}{2l})^2$, and in this case $\phi_1(l)=\cos\frac{\pi x}{2l}$. It is follows from our assumption $h_{\infty}>\Lambda$ that $\sigma_1(h_{\infty})<0$, which contradicts Lemma \ref{lm4.1}. In consequence, we get either $h_{\infty}\leq\Lambda$ or $h_{\infty}=\infty$.

Accordingly, $h_0\geq\Lambda$ implies $h_{\infty}=\infty$ for all ${\rho}>0$. Thus (ii) is proved.
\end{proof}

To emphasize the dependence of $h$ on $\rho$ and $h_0$, we substitute $h(t)$ with $h(\rho,h_0;t)$.
\begin{lemma}\label{lm4}
Suppose $m{\lambda}>b$. For any given $h_0>0$, there exists ${\rho}^0$ depending on $u_0$, $v_0$ and $h_0$, such that $h({\rho},h_0;\infty) =\infty$ for all ${\rho}>{\rho}^0$.
\end{lemma}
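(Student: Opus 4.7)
The strategy is a reduction to the well-known ``large-$\rho$ implies spreading'' criterion for the single-species logistic Stefan problem, via a scalar comparison that absorbs the predator into a harmless linear correction.

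First I would observe that $\frac{bv}{u+mv}\leq \frac{b}{m}$ for all $u,v\geq 0$, so from (\ref{Q})
\begin{equation*}
u_t - u_{xx} \;\geq\; \Big(\lambda - \tfrac{b}{m}\Big)\,u - u^2,\qquad 0<x<h(t),\ t>0.
\end{equation*}
Write $\alpha := \lambda - b/m > 0$ (by the hypothesis $m\lambda>b$). Together with $u_x(t,0)=0$, $u(t,h(t))=0$, $h'(t)=-\rho u_x(t,h(t))$, $u(0,\cdot)=u_0$, $h(0)=h_0$, this displays $(u,h)$ as a supersolution of the scalar Stefan problem
\begin{equation*}
\begin{cases}
\underline u_t - \underline u_{xx} = \alpha\underline u - \underline u^2, & t>0,\ 0<x<g(t),\\
\underline u_x(t,0)=0,\ \underline u(t,g(t))=0, & t\geq 0,\\
g'(t) = -\rho\,\underline u_x(t,g(t)), & t\geq 0,\\
\underline u(0,x)=u_0(x),\ g(0)=h_0.
\end{cases}
\end{equation*}
A standard one-phase Stefan comparison (compare \cite[Theorem 2.1]{wang10}) then yields $h(\rho,h_0;t)\geq g(\rho,h_0;t)$ and $u(t,x)\geq\underline u(t,x)$ on their common parabolic domain.

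The lemma is thus reduced to the corresponding statement for $(\underline u,g)$, which is by now classical for the logistic Stefan problem (originating with Du--Lin and used in the closely parallel \cite{wang9,wang11}): for any fixed $h_0>0$ there exists $\rho^0=\rho^0(u_0,h_0)$ such that $g(\rho,h_0;\infty)=\infty$ whenever $\rho>\rho^0$. If $h_0\geq\Lambda$ this is automatic by the scalar analogue of Theorem \ref{th4.1}(ii); otherwise one builds a time-independent subsolution $\underline\varphi$ of $-\varphi_{xx}=\alpha\varphi-\varphi^2$ supported on an interval slightly larger than $\Lambda$, chooses $\rho$ so large that the Stefan flux $-\rho\underline u_x(t,g(t))$ drives $g$ past $\Lambda$ in some finite time $T$, and then applies the scalar analogue of Theorem \ref{th4.1}(ii) from time $T$ onward. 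The comparison $h\geq g$ completes the proof; note that $\rho^0$ inherits dependence on $u_0$ and $h_0$ only, which is (slightly) stronger than the statement requires.

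The principal technical obstacle is the quantitative construction in the scalar step, namely exhibiting a subsolution that fits under $u_0$ on $[0,h_0]$ and computing the growth rate of the associated free boundary as a function of $\rho$; this is where the threshold $\rho^0$ is defined. Once that ingredient is in hand, the Stefan-comparison between the system and the scalar problem, and the appeal to Theorem \ref{th4.1}(ii), are standard.
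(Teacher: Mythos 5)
Your proposal is correct and, in substance, does the same thing the paper does: the paper gives no argument at all, simply asserting that the lemma ``can be deduced by Theorem \ref{th1} and [Lemma 3.2 of Wang--Zhang] immediately,'' i.e.\ it defers the quantitative large-$\rho$ work to a cited result for the Lotka--Volterra system, whereas you defer it to the classical Du--Lin result for the scalar logistic Stefan problem after first making the reduction explicit. Your reduction is exactly the one the paper's own comparison machinery is set up for: since $\frac{bv}{u+mv}\le\frac{b}{m}$, the pair $(u,h)$ is an upper solution of the scalar problem with growth rate $\alpha=\lambda-b/m$, which is precisely the lower-solution convention of Proposition \ref{rm1}, and the critical length $\frac{\pi}{2}\alpha^{-1/2}$ coincides with the paper's $\Lambda$; note also that Theorem \ref{th1} is not even needed for your route, since the bound on the reaction term is uniform in $v$. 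One small internal wrinkle in your sketch of the scalar step: a \emph{time-independent} subsolution supported on an interval slightly larger than $\Lambda$ cannot ``fit under $u_0$'' when $h_0<\Lambda$, and it is not what drives $g$ past $\Lambda$; in Du--Lin the threshold $\rho^0$ comes from a separate argument (a fast-moving lower solution, or the integral identity obtained by integrating the equation over $\{0<x<g(t)\}$) showing $g(T_0)>\Lambda$ for a fixed finite $T_0$ once $\rho$ is large, after which the stationary subsolution on $[0,\Lambda]$ takes over. Since you are citing that result rather than reproving it, this does not invalidate the proposal, but the sketch as written conflates the two stages.
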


Lemma \ref{lm4} can be deduced by Theorem \ref{th1} and \cite[Lemma 3.2]{wang9} immediately, so we omit the proof.

Define
$$
\mathcal E=\{k>0:h({\rho},h_0;\infty)=\infty,~\forall~h_0\geq k,\forall~{\rho}>0\},
$$
$$
\mathcal F=\{\ k>0:~\forall~0<h_0<k,~\exists~{\rho}_0>0,
~ s.t.~h({\rho},h_0;\infty)<\infty,~\forall ~ 0<{\rho}\leq{\rho}_0\}.
$$

Assume $m{\lambda}>b$ and set
$$h^*:=\inf\mathcal E,~h_*:=\sup\mathcal {F}.$$
It is natural that $h_*\leq h^*$. Theorem \ref{th4.1} implies that $[\Lambda,\infty)\subset\mathcal E$. Thus, we have $h^*\leq{\Lambda}$. In addition, we draw an important conclusion as the following lemma shows.

\begin{lemma}\label{lm4.5}
$h_*\geq\frac{\pi}{2}{\lambda}^{-\frac{1}{2}}$.
\end{lemma}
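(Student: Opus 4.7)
The plan is to apply the standard Du--Lin upper-solution construction. The number $\tfrac{\pi}{2}\lambda^{-1/2}$ is exactly the threshold at which the principal eigenvalue of $-\phi_{xx}=\nu\phi$ on $(0,h_0)$ with $\phi_x(0)=\phi(h_0)=0$, namely $(\pi/(2h_0))^2$, crosses $\lambda$. So if $h_0<\tfrac{\pi}{2}\lambda^{-1/2}$, then $(\pi/(2h_0))^2>\lambda$, and there is room to build a supersolution for $u$ that decays to $0$ on a bounded expanding domain. The key point is that the right-hand side of the $u$-equation is bounded above by $\lambda u$ independently of $v$ (since $u^2\geq0$ and $buv/(u+mv)\geq0$), so the supersolution argument decouples.

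Fix $h_0<\tfrac{\pi}{2}\lambda^{-1/2}$, pick $\varepsilon>0$ with $(1+\varepsilon)h_0<\tfrac{\pi}{2}\lambda^{-1/2}$, and set $\lambda_\varepsilon:=\bigl(\pi/(2(1+\varepsilon)h_0)\bigr)^2>\lambda$. Choose $\delta\in(0,\lambda_\varepsilon-\lambda)$ and define
$$
g(t):=h_0\bigl[1+\varepsilon(1-e^{-\delta t})\bigr],\qquad \bar u(t,x):=Me^{-\delta t}\cos\!\Bigl(\tfrac{\pi x}{2g(t)}\Bigr),
$$
with $M>0$ to be chosen. Then $g(0)=h_0$, $g(\infty)=(1+\varepsilon)h_0<\tfrac{\pi}{2}\lambda^{-1/2}$, and $g'(t)=h_0\varepsilon\delta e^{-\delta t}>0$.

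I would then verify the four supersolution conditions in turn. First, using $\bar u_{xx}=-(\pi/(2g(t)))^2\bar u$ and the positivity of the $g'$-term arising from $\bar u_t$, one checks
$\bar u_t-\bar u_{xx}\geq((\pi/(2g(t)))^2-\delta)\bar u\geq(\lambda_\varepsilon-\delta)\bar u\geq\lambda\bar u\geq\lambda\bar u-\bar u^2-b\bar u v/(\bar u+mv)$. Second, $\bar u_x(t,0)=0$ and $\bar u(t,g(t))=0$. Third, the initial inequality $\bar u(0,x)=M\cos(\pi x/(2h_0))\geq u_0(x)$ on $[0,h_0]$ holds for $M$ large enough, because the $C^2$-regularity of $u_0$ together with $u_0(h_0)=0$ makes $u_0(x)/\cos(\pi x/(2h_0))$ bounded on $[0,h_0)$. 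Fourth, for the moving boundary we compute $-\rho\bar u_x(t,g(t))=\rho M e^{-\delta t}\pi/(2g(t))$, and the requirement $g'(t)\geq-\rho\bar u_x(t,g(t))$ reduces to
$$
\rho\leq\frac{2h_0\varepsilon\delta\, g(t)}{M\pi},
$$
which is ensured for all $t\geq 0$ by taking $\rho\leq\rho_0:=2h_0^2\varepsilon\delta/(M\pi)$.

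With these four conditions, the usual comparison for the one-phase free boundary problem (applied to the $u$-equation alone, since its right-hand side was bounded above by $\lambda\bar u$ irrespective of $v$) gives $h(t)\leq g(t)\leq(1+\varepsilon)h_0$ for all $t>0$ whenever $0<\rho\leq\rho_0$; hence $h_\infty<\infty$. Since $h_0<\tfrac{\pi}{2}\lambda^{-1/2}$ was arbitrary, $\tfrac{\pi}{2}\lambda^{-1/2}\in\mathcal F$ in the sense of the definition, so $h_*\geq\tfrac{\pi}{2}\lambda^{-1/2}$. The only delicate step is the fourth: one must verify that the single exponential decay rate $\delta$ works simultaneously for the interior inequality (which needs $\delta$ small) and for controlling $\rho$ via the boundary inequality (which only restricts $\rho$, not $\delta$); the choice $\delta\in(0,\lambda_\varepsilon-\lambda)$ together with $\rho\leq\rho_0$ above balances these.
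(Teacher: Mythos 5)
Your proposal is correct and follows essentially the same route as the paper: a Du--Lin type upper solution $Ce^{-\alpha t}\cos\bigl(\tfrac{\pi x}{2\sigma(t)}\bigr)$ on a slowly expanding interval that stays below $\tfrac{\pi}{2}\lambda^{-1/2}$, with the exponential rate chosen from the gap $(\pi/(2h_0))^2-\lambda>0$ and $\rho_0$ determined by the boundary inequality, followed by the comparison principle to get $h_\infty<\infty$ and hence $\tfrac{\pi}{2}\lambda^{-1/2}\in\mathcal F$. The only cosmetic difference is that the paper starts its comparison interval at $h_0(1+\delta/2)>h_0$ rather than exactly at $h_0$; both are admissible for the comparison principle.
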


\begin{proof}
We assert that if $h_0<\frac{\pi}{2}{\lambda}^{-\frac{1}{2}}$, then there exists ${\rho}_0>0$ such that $h({\rho},h_0;\infty)<\infty$ for all $0<{\rho}\leq{\rho}_0$. Thus, we have $\frac{\pi}{2}{\lambda}^{-\frac{1}{2}}\in\mathcal{F}$, which implies that $h_*\geq\frac{\pi}{2}{\lambda}^{-\frac{1}{2}}$. We use the method in reference \citep{Du2} to construct a suitable upper solution of (\ref{Q}).
Define
$$
\sigma(t)=h_0(1+\delta-\frac{\delta}{2}e^{-\gamma t}),~t\geq0;~V(y)=\cos\frac{\pi y}{2},~0\leq y\leq1,
$$
and
$$
w(t,x)=Ce^{-\alpha t}V(\frac{x}{\sigma(t)}),~t\geq0,~0\leq x\leq\sigma (t),
$$
where $\delta$, $\gamma$, $\alpha$, $C$ are positive constants to be determined. We choose $C$ sufficiently large such that $u_0(x)\leq C\cos(\frac{\pi}{2}\frac{x}{h_0(1+{\delta}/{2})})$. Besides, it is easy to verify that
$$u_0(x)\leq w(0,x),~h_0<(1+\frac{\delta}{2})h_0=\sigma(0),~0<x<\sigma(t),$$
$$w_x(t,0)=w(t,\sigma(t))=0,~t>0.$$
Noting that $\sigma(t)<h_0(1+\delta)$ for all $t>0$, direct calculations deduce that
$$
\begin{array}{ll}
&w_t-w_{xx}-w({\lambda}-w)\\
=&w[-\alpha+(\frac{\pi}{2})x\sigma^{-2}\sigma^{\prime}\tan(\frac{\pi}{2}\frac{x}{\sigma(t)})+(\frac{\pi}{2})^2\sigma^{-2}(t)-{\lambda}+w]\\
\geq& w[-\alpha+(\frac{\pi}{2})^2(1+\delta)^{-2}h_0^{-2}-{\lambda}+w].\\
\end{array}
$$
Note that ${\lambda}<(\frac{\pi}{2})^2h_0^{-2}$, and we can find $\delta>0$ such that
$$
(\frac{\pi}{2})^2(1+\delta)^{-2}h_0^{-2}-{\lambda}=\frac{1}{2}\left[(\frac{\pi}{2})^2h_0^{-2}-{\lambda}\right].
$$
Take $\alpha=\frac{1}{2}\left[(\frac{\pi}{2})^2h_0^{-2}-{\lambda}\right]$, and we have $w_t-w_{xx}-w({\lambda}-w)\geq0$ for $t>0$ and $0<x<\sigma(t)$. Let ${\rho}_0={\rho}_0(C):=\frac{\delta\gamma h_0^2}{C\pi}$ and $\alpha=\gamma$, then for any $0<{\rho}\leq{\rho}_0$, we have
$$
\sigma^{\prime}(t)\geq-{\rho} w_x(t,\sigma(t)),~t\geq0.
$$
By applying Proposition \ref{rm1}, we have $u(t,x)\leq w(t,x)$ and $h(t)\leq\sigma(t)$ for $t>0$ and $0\leq x\leq \sigma(t)$. Thus $\sigma(t)\rightarrow h_0(1+\delta)$ as $t\rightarrow\infty$, implying that $h({\rho},h_0;\infty)<\infty$. The proof is finished.
\end{proof}

From the above discussions, we have the spreading-vanishing dichotomy and the criteria for spreading and vanishing.
\begin{theorem}\label{th4.2}
Assume that $m{\lambda}>b$. Then either spreading ($h_{\infty}=\infty$) or vanishing ($h_{\infty}\leq\Lambda$) holds. To be more precisely,

{\upshape (i)}$h({\rho},h_0;\infty)=\infty$ for all $h_0> h^*$ and ${\rho}>0$;

{\upshape (ii)}for any given $h_0>0$, there exists ${\rho}^0>0$, which depends on $u_0$, $v_0$, $h_0$, such that $h({\rho},h_0;\infty)=\infty$ for all ${\rho}>{\rho}^0$;

{\upshape (iii)}for any given $0<h_0<h_*$, there exists ${\rho}_0>0$, which also depends on $u_0$, $v_0$, $h_0$, such that $h({\rho},h_0;\infty)\leq\Lambda$ for all $0<{\rho}\leq{\rho}_0$.
\end{theorem}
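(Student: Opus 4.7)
The plan is to assemble the three parts of the theorem by unpacking the definitions of $\mathcal{E}$, $\mathcal{F}$, $h^\ast = \inf \mathcal{E}$, and $h_\ast = \sup \mathcal{F}$, and combining them with the results already established in this section. The underlying dichotomy ``either $h_\infty = \infty$ or $h_\infty \leq \Lambda$'' is precisely Theorem \ref{th4.1}(i), so nothing further is needed to justify that statement itself.

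For part (i), I would first note that Theorem \ref{th4.1}(ii) shows $\Lambda \in \mathcal{E}$, so $\mathcal{E}$ is nonempty and $h^\ast \leq \Lambda < \infty$. Given any $h_0 > h^\ast$, the infimum characterization produces some $k \in \mathcal{E}$ with $k \leq h_0$, and the defining property of $\mathcal{E}$ then yields $h(\rho, h_0; \infty) = \infty$ for all $\rho > 0$. Part (ii) is exactly the statement of Lemma \ref{lm4}, so nothing more need be written.

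For part (iii), Lemma \ref{lm4.5} shows that $\frac{\pi}{2}\lambda^{-1/2} \in \mathcal{F}$, so $\mathcal{F}$ is nonempty and $h_\ast \geq \frac{\pi}{2}\lambda^{-1/2} > 0$. For any $0 < h_0 < h_\ast$, the supremum characterization produces $k \in \mathcal{F}$ with $h_0 < k \leq h_\ast$; the defining property of $\mathcal{F}$ then furnishes $\rho_0 > 0$ with $h(\rho, h_0; \infty) < \infty$ for all $0 < \rho \leq \rho_0$, and Theorem \ref{th4.1}(i) upgrades this to $h(\rho, h_0; \infty) \leq \Lambda$.

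Essentially all of the analytic difficulty has already been carried out in the preceding lemmas: the a priori upper bound $\Lambda$ via the eigenvalue analysis of Lemma \ref{lm4.1}, the large-$\rho$ spreading assertion of Lemma \ref{lm4}, and the explicit supersolution construction of Lemma \ref{lm4.5}. The proof of Theorem \ref{th4.2} itself is therefore largely bookkeeping; the only care required is to keep the one-sided strict/non-strict inequalities straight when unpacking the infimum and supremum, and to verify that $\mathcal{E}$ and $\mathcal{F}$ are each nonempty so that $h^\ast$ and $h_\ast$ are meaningful finite positive numbers. Both of these checks are immediate from the earlier results, so I expect no further obstacle within the proof of the theorem itself.
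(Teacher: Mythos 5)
Your proposal is correct and matches the paper's (implicit) argument: the paper offers no separate proof of Theorem \ref{th4.2}, stating only that it follows ``from the above discussions,'' i.e.\ precisely by combining Theorem \ref{th4.1}, Lemma \ref{lm4}, and Lemma \ref{lm4.5} with the definitions of $\mathcal E$, $\mathcal F$, $h^*$, and $h_*$ as you do. Your unpacking of the infimum/supremum and the nonemptiness checks ($\Lambda\in\mathcal E$, $\frac{\pi}{2}\lambda^{-1/2}\in\mathcal F$) is exactly the intended bookkeeping.
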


\begin{remark}
The estimates $\frac{\pi}{2}{\lambda}^{-\frac{1}{2}}\leq h_*\leq h^*\leq\frac{\pi}{2}\sqrt{\frac{m}{m{\lambda}-b}}$ are obtained. But
we can not prove $h^*=h_*$, and get the sharp criteria for spreading and vanishing with respect to the initial $h_0$.
\end{remark}

\section{\large{Asymptotic speed of $h$}}\large
In this section, we mainly give some estimates of ${h(t)}/{t}$ as $t\rightarrow\infty$. In the following we always assume that $m\lambda>b$.
\begin{definition}
Assume that $u(t,x)$ is a nonnegative function for $x>0$, $t>0$. We call $c_*$ as the asymptotic speed of $u(t,x)$ if

{\upshape(a)}$\lim\limits_{t\rightarrow\infty}\sup\limits_{x>(c_*+\varepsilon)t}u(t,x)=0$ for any given $\varepsilon>0$,

{\upshape(b)}$\lim\limits_{t\rightarrow\infty}\inf\limits_{0<x<(c_*-\varepsilon)t}u(t,x)>0$ for any given $\varepsilon\in(0,c_*)$.
\end{definition}

For the following diffusive logistic equations
\begin{equation}\label{78}
\begin{cases}
w_t-dw_{xx}=w(a-bw),  &t>0,~x>0,\\
w_x(t,0)=0,&t\geq0,\\
w(0,x)=w_0>0,&x\geq0.
\end{cases}
\end{equation}

\begin{proposition}\upshape{(\citep{24})}\label{p6}
It is well known that the asymptotic speed of problem (\ref{78}) $c_*=2\sqrt{ad}$ and
$$
\liminf\limits_{t\rightarrow\infty}\inf\limits_{0<x<(c_*-\varepsilon)t}w(t,x)=\frac{a}{b},~~\limsup\limits_{t\rightarrow\infty}\sup\limits_{x>(c_*+\varepsilon)t}w(t,x)=0
$$
for any small $\varepsilon\in(0,c_*)$.
\end{proposition}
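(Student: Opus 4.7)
The plan is to reduce the half-line problem to the classical Aronson--Weinberger spreading theorem on $\mathbb{R}$ by even reflection across $x=0$. The Neumann condition $w_x(t,0)=0$ guarantees that $\tilde w(t,x):=w(t,|x|)$ is a classical solution of the same logistic equation on all of $\mathbb{R}$ with bounded, nonnegative, not identically zero initial data; both assertions of the proposition then follow by restricting the whole-line spreading result for $\tilde w$ to $x>0$.

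For the upper bound on the speed, I would linearize by dropping the quadratic term to obtain $\tilde w_t-d\tilde w_{xx}\leq a\tilde w$, and compare with the exponential supersolution $\overline w(t,x)=Me^{-\mu(x-ct)}$, which satisfies $\overline w_t-d\overline w_{xx}-a\overline w=(c\mu-d\mu^2-a)\overline w\geq 0$ as soon as $c\mu\geq d\mu^2+a$. Optimizing in $\mu$ pins down the threshold speed $c_*=2\sqrt{ad}$ at $\mu=\sqrt{a/d}$; for any $c>c_*$, choosing $M$ so large that $\tilde w_0(x)\leq Me^{-\mu x}$, the parabolic comparison principle yields $\tilde w(t,x)\leq Me^{-\mu(x-ct)}$, which immediately forces $\sup_{x>(c_*+\varepsilon)t}\tilde w(t,x)\to 0$.

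For the lower bound I would combine two ingredients. First, local stabilization $\tilde w(t,\cdot)\to a/b$ locally uniformly on $\mathbb{R}$, obtained by sandwiching $\tilde w$ between the constant supersolution $\max(\|w_0\|_\infty,a/b)$ and the positive stationary solution $W_L$ of the Dirichlet problem $-dW''=W(a-bW)$ on $(-L,L)$, which exists once $L>\tfrac{\pi}{2}\sqrt{d/a}$ and satisfies $W_L\to a/b$ as $L\to\infty$. Second, a compactly supported moving subsolution $\underline w(t,x)=\eta\phi_L(x-x_0-ct)$ on $\{|x-x_0-ct|<L\}$, where $\phi_L>0$ is the principal Dirichlet eigenfunction of $-d\partial_x^2$ on $(-L,L)$ with eigenvalue $d(\pi/2L)^2<a$; a direct computation shows $\underline w$ is a subsolution provided $c$ is slightly less than $c_*$ and $\eta$ is small. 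Shifting $x_0$ and iterating the Aronson--Weinberger propagation step spreads the near-$a/b$ region forward along the entire set $\{0<x<(c_*-\varepsilon)t\}$, giving $\liminf\geq a/b$; combined with the matching upper bound $\liminf\leq\lim_{t\to\infty}\tilde w(t,x_0)=a/b$ coming from local stabilization at any fixed $x_0$, equality holds.

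The main obstacle is the construction and propagation of the moving subsolution: one must tune $L$, $\eta$, and $c$ simultaneously so that $\underline w$ is a subsolution, is dominated by $\tilde w$ at some initial instant on the support of $\phi_L$ (this is where the local stabilization feeds in), and can be iterated in $x_0$ so that the near-$a/b$ zone keeps pace with the advancing front. This is exactly the classical Aronson--Weinberger mechanism, and on the half-line with Neumann data the only additional step beyond the whole-line case is the initial reflection argument.
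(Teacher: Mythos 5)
The paper does not actually prove this proposition: it is quoted from reference \cite{24} as a known consequence of Aronson--Weinberger spreading theory, so there is no in-paper argument to compare yours against, and your sketch must stand on its own. Your even reflection across $x=0$ and your upper bound (exponential supersolution $Me^{-\mu(x-ct)}$ with $c\mu\geq d\mu^2+a$, optimized at $\mu=\sqrt{a/d}$) are the standard route and are fine, modulo one caveat you should make explicit: as literally written, problem (\ref{78}) has initial datum ``$w_0>0$'' on all of $x\geq0$, and for a positive constant datum the claim $\limsup_{t\to\infty}\sup_{x>(c_*+\varepsilon)t}w=0$ is false (the solution is then spatially constant and tends to $a/b$). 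The statement only makes sense for compactly supported or front-like data, and your step ``choose $M$ so that $\tilde w_0(x)\leq Me^{-\mu x}$'' silently assumes this.

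The genuine gap is in your lower-bound subsolution. The moving bump $\underline w(t,x)=\eta\,\phi_L(x-x_0-ct)$ with $\phi_L(y)=\cos\bigl(\pi y/(2L)\bigr)$ is \emph{not} a subsolution for any $c>0$, no matter how small $\eta$ is: writing $y=x-x_0-ct$,
\[
\underline w_t-d\underline w_{xx}-\underline w(a-b\underline w)
=\eta\Bigl[-c\,\phi_L'(y)+\bigl(d(\tfrac{\pi}{2L})^2-a\bigr)\phi_L(y)+b\eta\,\phi_L(y)^2\Bigr],
\]
and at the leading edge $y\uparrow L$ the terms proportional to $\phi_L$ vanish while $-c\,\phi_L'(L)=c\pi/(2L)>0$, so the differential inequality goes the wrong way exactly where it matters. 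The standard fix is to use the principal eigenfunction of the moving-frame operator $-d\partial_{yy}-c\partial_y$, namely $e^{-cy/(2d)}\cos(\pi y/(2L))$, whose eigenvalue $d(\pi/(2L))^2+c^2/(4d)$ lies below $a$ precisely when $c<2\sqrt{ad}$ and $L$ is large; this is also the only place the threshold $c_*$ enters the lower bound, and your unweighted bump never sees it. (Equivalently, one can use the classical subsolution $\eta e^{-\alpha y}\sin(\beta y)$ built from the complex roots of $d\mu^2-c\mu+a=0$ for $c<c_*$.) With that replacement, the rest of your scheme --- local stabilization to $a/b$ via the Dirichlet steady states $W_L$, iteration of the bump in $x_0$, and the trivial reverse inequality $\liminf_{t\to\infty}\inf_{0<x<(c_*-\varepsilon)t}w\leq\lim_{t\to\infty}w(t,x_0)=a/b$ --- goes through.
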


For the classical logistic problem with a free boundary problem
\begin{equation}\label{79}
\begin{cases}
w_t-dw_{xx}=w(a-bw),~&t>0,~0<x<s(t),\\
s^{\prime}(t)=-{\rho} w_x(t,s(t)),&t\geq0,\\
w_x(t,0)=w(t,s(t))=0, &t\geq0,\\
w(0,x)=w_0,~s(0)=s_0,&0\leq x\leq s_0.
\end{cases}
\end{equation}
It has proved that the expanding front $s(t)$ moves at a constant speed for large time (see \cite{Du1,Du6}), i.e.,
$$
s(t)=(c_0+o(1))t~~as ~~t\rightarrow\infty.
$$
And $c_0$ is determined by the following auxiliary elliptic problem
\begin{equation}\label{63}
\left\{
\begin{array}{ll}
dq^{\prime\prime}-cq^{\prime}+q(a-bq)=0,~~0<y<\infty,\\
q(0)=0,~~q^{\prime}(0)=c/{\rho},~~q(\infty)=\frac{a}{b},\\
c\in(0,2\sqrt{ad});~~q^{\prime}(y)>0,~~0<y<\infty,
\end{array}
\right.
\end{equation}
where ${\rho}$, $d$, $a$, $b$ are positive constants.
\begin{proposition}\label{p3}\upshape{(\citep{Du6})}
The problem (\ref{63}) has a unique solution $(q(y),c)$ and $c({\rho},d,a,b)$ is strictly increasing in ${\rho}$ and $a$, respectively. Moreover,
\begin{equation}\label{64}
\lim\limits_{\frac{a{\rho}}{bd}\rightarrow\infty}\frac{c({\rho},d,a,b)}{\sqrt{ad}}=2,~~\lim\limits_{\frac{a{\rho}}{bd}\rightarrow 0}\frac{c({\rho},d,a,b)}{\sqrt{ad}}\frac{bd}{a{\rho}}=\frac{1}{\sqrt{3}}.
\end{equation}
\end{proposition}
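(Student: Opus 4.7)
My plan is to first normalize the problem to a one--parameter family. Setting $y=\sqrt{d/a}\,\tilde{y}$, $q(y)=(a/b)\,\tilde{q}(\tilde{y})$, $\tilde{c}=c/\sqrt{ad}$ and $\mu=a\rho/(bd)$, problem (\ref{63}) is equivalent to
\begin{equation*}
\tilde{q}''-\tilde{c}\,\tilde{q}'+\tilde{q}(1-\tilde{q})=0,\quad \tilde{q}(0)=0,\ \tilde{q}'(0)=\tilde{c}/\mu,\ \tilde{q}(\infty)=1,\ \tilde{q}'>0.
\end{equation*}
Hence the whole theorem reduces to showing: for each $\mu>0$ the reduced problem has a unique $\tilde{c}=\tilde{c}(\mu)\in(0,2)$; $\tilde{c}(\mu)$ is strictly increasing; $\tilde{c}(\mu)\to 2$ as $\mu\to\infty$; and $\tilde{c}(\mu)/\mu\to 1/\sqrt{3}$ as $\mu\to 0$. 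Monotonicity of $c$ in $\rho$ and $a$ will follow automatically, since $\mu$ is increasing in both and $c=\sqrt{ad}\,\tilde{c}(\mu)$ carries an additional monotone factor of $\sqrt{a}$.

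Next I would carry out a phase--plane/shooting analysis. Writing $p=\tilde{q}'$, the ODE becomes the planar system $\tilde{q}'=p$, $p'=\tilde{c}\,p-\tilde{q}(1-\tilde{q})$, whose equilibria are $(0,0)$ (an unstable spiral for $\tilde{c}\in(0,2)$) and $(1,0)$ (a saddle for all $\tilde{c}\geq 0$). The branch of the one--dimensional stable manifold of the saddle lying in the strip $\{0<\tilde{q}<1,\ p>0\}$ is a monotone trajectory (no interior equilibrium exists, and $p>0$ is preserved), so it crosses the line $\tilde{q}=0$ at a unique height $p=P(\tilde{c})$. A standard comparison of trajectories for different values of $\tilde{c}$, exploiting that the $p'$--component of the vector field is monotone in $\tilde{c}$ on $\{p>0\}$, yields that $P$ is continuous and strictly decreasing on $(0,2)$. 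The matching condition then becomes the scalar equation $P(\tilde{c})=\tilde{c}/\mu$. As $\mu$ varies, the right--hand side is a strictly decreasing linear function of $\mu$ for each fixed $\tilde{c}$, while $P$ is strictly decreasing in $\tilde{c}$, so a graphical argument yields a unique solution $\tilde{c}(\mu)$ that is strictly increasing in $\mu$.

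Finally I would pin down the two asymptotic limits by evaluating $P$ at the endpoints of $(0,2)$. For $\tilde{c}=0$ the system is Hamiltonian with energy $H(\tilde{q},p)=\tfrac12 p^2+\tfrac12\tilde{q}^2-\tfrac13\tilde{q}^3$; the separatrix through the saddle $(1,0)$ has $H=1/6$, so setting $\tilde{q}=0$ gives $p=1/\sqrt{3}$. By continuous dependence on $\tilde{c}$, one has $P(\tilde{c})\to 1/\sqrt{3}$ as $\tilde{c}\to 0^+$, and therefore $\tilde{c}(\mu)/\mu=P(\tilde{c}(\mu))\to 1/\sqrt{3}$ as $\mu\to 0^+$. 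For $\tilde{c}\to 2^-$ the classical KPP traveling--wave connection from $(0,0)$ to $(1,0)$ emerges in the limit, forcing $P(\tilde{c})\to 0$; the relation $\mu=\tilde{c}/P(\tilde{c})$ then gives $\mu\to\infty$ with $\tilde{c}(\mu)\to 2$.

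The main obstacle I anticipate is verifying the monotonicity of $P$ and the boundary limit at $\tilde{c}=2$. The limit at $\tilde{c}=0$ is essentially a one--line Hamiltonian computation, but $P(\tilde{c})\to 0$ as $\tilde{c}\to 2^-$ is more delicate because the character of $(0,0)$ degenerates from a spiral into a node precisely at $\tilde{c}=2$. The cleanest way to handle it is to sandwich the height of the stable manifold at $\tilde{q}=0$ between $0$ and the corresponding height of a suitable KPP sub--wave traveling with speed slightly below $2$, and to appeal to the well--known uniqueness (up to translation) of the critical KPP traveling wave to pass to the limit.
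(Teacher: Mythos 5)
Your proposal is correct in outline, but note that the paper does not prove this statement at all: Proposition \ref{p3} is quoted verbatim from the cited reference (the Du--Lin / Bunting--Du--Krakowski analysis of the semi-wave problem), so any comparison is really with that source rather than with an argument in this paper. Your route is essentially the standard one from those works: the rescaling $\tilde q = (b/a)q$, $\tilde y=\sqrt{a/d}\,y$ correctly reduces everything to the one-parameter family indexed by $\mu=a\rho/(bd)$ (and you rightly observe that monotonicity in $a$ needs the extra factor $\sqrt a$ in $c=\sqrt{ad}\,\tilde c(\mu)$, since $\tilde c$ alone only gives monotonicity in $\mu$); the shooting function $P(\tilde c)$, its strict monotonicity via the scalar equation $dP/d\tilde q=\tilde c-\tilde q(1-\tilde q)/P$, and the matching relation $P(\tilde c)=\tilde c/\mu$ are exactly the key devices in the cited proof; and the Hamiltonian computation $\tfrac12 p^2+\tfrac12\tilde q^2-\tfrac13\tilde q^3=\tfrac16$ giving $P(0^+)=1/\sqrt3$ is the correct source of the constant $1/\sqrt3$. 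Two small remarks: (a) for the limit $\tilde c(\mu)\to2$ as $\mu\to\infty$ you do not actually need $P(2^-)=0$ --- monotone boundedness of $\tilde c(\mu)$ plus $P(\tilde c(\mu))=\tilde c(\mu)/\mu\to0$ and positivity of $P$ on $(0,2)$ already force the limit to be $2$; where $P(2^-)=0$ is genuinely needed is in guaranteeing that the curves $P(\tilde c)$ and $\tilde c/\mu$ intersect for \emph{every} large $\mu$, i.e.\ existence, and there your KPP-sandwich argument (or the degeneration of the spiral into a node at $\tilde c=2$) is the right tool, exactly as you flagged; (b) similarly, $\tilde c(\mu)\to0$ as $\mu\to0^+$ should be derived from the matching relation (a positive limit would make $\tilde c(\mu)/\mu$ blow up while $P$ stays bounded) before invoking $P(0^+)=1/\sqrt3$. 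With those points made explicit your sketch is a complete and faithful reconstruction of the cited result.
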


For $t\geq 0$ and $x\geq 0$, the following inequalities are natural,
\begin{equation}\label{65}
\left\{
\begin{array}{ll}
({\lambda}-{b}/{m})u-u^2\leq{\lambda} u-u^2-\frac{buv}{u+mv}\leq{\lambda} u-u^2,\\
{\mu} v-v^2\leq {\mu} v -v^2+\frac{cuv}{u+mv}\leq({\mu}+c)v-v^2.
\end{array}
\right.
\end{equation}

Define
$$
c_1=2\sqrt{{\lambda}-{b}/{m}},~c_2=2\sqrt{{\lambda}},~c_3=2\sqrt{d{\mu}},~c_4=2\sqrt{d({\mu}+c)}.
$$
\begin{theorem}\label{th6.1}
For any given $0<\varepsilon\ll1$, the following conclusions hold,
\begin{equation}\label{76}
\limsup\limits_{{\rho}\rightarrow\infty}\limsup\limits_{t\rightarrow\infty}\frac{h(t)}{t}\leq c_2,~\liminf\limits_{{\rho}\rightarrow\infty}\liminf\limits_{t\rightarrow\infty}\frac{h(t)}{t}\geq c_1.
\end{equation}
Moreover, when ${\rho}\rightarrow\infty$,
\begin{equation}\label{74}
\limsup\limits_{t\rightarrow\infty}\sup\limits_{x>(c_2+\varepsilon)t}u(t,x)=0,
\end{equation}
\begin{equation}\label{74.0}
\liminf\limits_{t\rightarrow\infty}\inf\limits_{0<x<(c_1-\varepsilon)t}u(t,x)\geq{\lambda}-\frac{b}{m},
\end{equation}

\begin{equation}\label{75}
\limsup\limits_{t\rightarrow\infty}\sup\limits_{x>(c_4+\varepsilon)t}v(t,x)=0,
\end{equation}
\begin{equation}\label{75.0}
\liminf\limits_{t\rightarrow\infty}\inf\limits_{0<x<(c_3-\varepsilon)t}v(t,x)\geq{\mu}.
\end{equation}
\end{theorem}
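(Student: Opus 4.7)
The strategy is to use the two-sided logistic bounds (\ref{65}) to sandwich both $u$ and $v$ between a larger and a smaller pure logistic equation, and then read off the spreading speeds from Propositions \ref{p6} (Cauchy problem) and \ref{p3} (free boundary problem). The sandwich reduces every statement in the theorem to a combination of these scalar logistic results and the standard comparison principle.

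For the upper half of (\ref{76}) and for (\ref{74}) and (\ref{75}), I would introduce $(\bar u,\bar s)$, the solution of the FBP with reaction $\lambda\bar u-\bar u^2$, diffusion $1$, the same parameter $\rho$ and the same initial data $u_0,h_0$. The upper inequality in (\ref{65}) together with the FBP comparison principle give $u\le\bar u$ and $h\le\bar s$, while Proposition \ref{p3} yields $\bar s(t)/t\to c(\rho,1,\lambda,1)$, and the first limit in (\ref{64}) produces $c_2$ as $\rho\to\infty$. For the pointwise bound (\ref{74}) I would extend $u$ by zero past $x=h(t)$; since $u(t,h(t))=0$ with an upward jump of $u_x$, this extension is a subsolution of the half-line Cauchy logistic problem $\tilde u_t-\tilde u_{xx}=\lambda\tilde u-\tilde u^2$ with Neumann condition at $x=0$, so Proposition \ref{p6} with speed $c_2$ controls $u$ outside $(c_2+\varepsilon)t$. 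The upper bound (\ref{75}) on $v$ is entirely parallel: extend $v$ by zero and compare with the Cauchy problem $\tilde v_t-d\tilde v_{xx}=(\mu+c)\tilde v-\tilde v^2$, whose spreading speed is $c_4$.

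For the lower half of (\ref{76}) and for (\ref{74.0}), the assumption $m\lambda>b$ makes $(\lambda-b/m)u-u^2$ a genuine logistic term. I would compare $(u,h)$ from below with the FBP triple $(\underline u,\underline s)$ having this reaction, diffusion $1$, and the same $\rho,h_0,u_0$; FBP comparison yields $u\ge\underline u$ and $h\ge\underline s$, while Proposition \ref{p3} together with (\ref{64}) gives $\underline s(t)/t\to c(\rho,1,\lambda-b/m,1)\to c_1$ as $\rho\to\infty$. For (\ref{74.0}) I would invoke the quantitative inner-convergence result accompanying Proposition \ref{p3}: $\underline u(t,x)\to\lambda-b/m$ uniformly on $[0,(c-\varepsilon')t]$ for every $c<c(\rho,1,\lambda-b/m,1)$, a classical by-product of the proof of the spreading-speed formula.

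The main obstacle is the lower bound (\ref{75.0}) for $v$. Starting from $v_t-dv_{xx}\ge\mu v-v^2$, I would let $W$ solve the pure logistic equation on $(0,h(t))$ with Neumann condition at $x=0$, Dirichlet condition $W(t,h(t))=0$ and initial data $v_0$; parabolic comparison on this moving-boundary cylinder gives $v\ge W$. It then remains to show that $W(t,x)\to\mu$ uniformly on $[0,(c_3-\varepsilon)t]$ as $\rho\to\infty$. The idea is that for $\rho$ large the free boundary $h(t)$ grows fast enough for the moving Dirichlet boundary to be asymptotically irrelevant to $W$, so that $W$ behaves essentially like the Cauchy logistic problem of Proposition \ref{p6} with speed $c_3$: concretely, one builds a compactly supported Fisher--KPP sub-solution whose spreading front advances at a speed arbitrarily close to $c_3$ and remains strictly inside $(0,h(t))$ for $t$ large. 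The delicate step is ensuring the sub-solution stays inside $(0,h(t))$, since the lower bound on $h(t)/t$ established above is only $c_1$, which need not dominate $c_3$; handling this requires either sharpening the lower bound on $h(t)/t$ for $\rho\to\infty$ using the $\lambda u-u^2$ FBP comparison to create extra room, or using a translating Fisher--KPP sub-solution of traveling-wave type that exploits the fact that $h(t)\to\infty$ superlinearly fast in $\rho$.
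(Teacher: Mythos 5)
Your treatment of \eqref{76}, \eqref{74}, \eqref{74.0} and \eqref{75} follows essentially the same route as the paper: sandwich $u$ and $h$ between the free boundary logistic problems with reactions $(\lambda-b/m)u-u^2$ and $\lambda u-u^2$ via (\ref{65}) and the comparison principle, read off the speeds $c_1,c_2$ from Proposition \ref{p3} and (\ref{64}), use the traveling-wave convergence of the lower free boundary solution for \eqref{74.0}, and compare the zero-extension of $v$ with the Cauchy problem of Proposition \ref{p6} for \eqref{75}. (For \eqref{74} the paper is more direct: once $h(t)<(c_2+\varepsilon)t$ is known, $u$ vanishes identically on $x>(c_2+\varepsilon)t$, so no Cauchy comparison is needed.)

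The genuine problem is \eqref{75.0}, and you have correctly located it but not closed it. Comparison from below forces you to work on the moving domain $(0,h(t))$ with a Dirichlet condition at $x=h(t)$, so the logistic subsolution can only spread at speed $\min\{c_3,\,\liminf h(t)/t\}$; your two proposed remedies both fail. Sharpening the lower bound on $h(t)/t$ is impossible beyond $c_2$: the upper comparison already gives $h(t)/t\to c(\rho,1,\lambda,1)<c_2=2\sqrt{\lambda}$ for \emph{every} $\rho$, so $h$ never grows ``superlinearly fast in $\rho$'', and if $d\mu>\lambda$ the window $(h(t),(c_3-\varepsilon)t)$ is eventually nonempty, $v\equiv 0$ there, and the infimum in \eqref{75.0} is $0$, not $\geq\mu$. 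Even for $c_1<c_3\le c_2$ the only available lower bound on $h(t)/t$ is $c_1$, which does not suffice. So \eqref{75.0} requires a hypothesis forcing the prey's front to outrun $c_3$ (e.g.\ $d\mu\le\lambda-b/m$, in the spirit of the assumption of Theorem \ref{th5.2}); to be fair, the paper's own one-sentence justification of \eqref{75.0} by ``Proposition \ref{p6} and the comparison principle'' suffers from exactly the same defect, but your proposal as written does not supply a valid argument either.
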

\begin{proof}
It follows from the first limit of (\ref{64}) that
$$
\lim_{{\rho}\rightarrow\infty}c({\rho},1,{\lambda}-b/m,1)=c_1,~\lim_{{\rho}\rightarrow\infty}c({\rho},1,{\lambda},1)=c_2.
$$
In view of (\ref{65}) and Proposition \ref{rm1}, it can be deduced that for any given $\varepsilon>0$, there exists ${\rho}_{\varepsilon}\gg1$ such that, for all ${\rho}>{\rho}_{\varepsilon}$,
$$
 c_1-\frac{\varepsilon}{2}<c({\rho},1,{\lambda}-b/m,1)\leq\liminf\limits_{t\rightarrow\infty}\frac{h(t)}{t},~
\limsup\limits_{t\rightarrow\infty}\frac{h(t)}{t}\leq c({\rho},1,{\lambda},1)<c_2+\frac{\varepsilon}{2}.
$$
By letting $\varepsilon\rightarrow0$, (\ref{76}) holds.

Besides, we can find $\tau_1\gg1$ such that for all $t\geq\tau_1$ and ${\rho}\gg{\rho}_{\varepsilon}$,
\begin{equation}\label{80}
(c_1-\varepsilon)t<h(t)<(c_2+\varepsilon)t.
\end{equation}
Obviously, (\ref{74}) holds.

The proof of (\ref{74.0}). Let $(\underline u,\underline h)$ be the unique solution of (\ref{79}) with $({\rho},d,a,b)=({\rho},1,{\lambda}-b/m,1)$.
Then $h(t)\geq\underline h(t)$, $u(t,x)\geq\underline u(t,x)$ for $t>0$ and $0<x\leq\underline h(t)$ by Proposition \ref{rm1}. Make use of \cite[Theorem 3.1]{wang5}, we get
$$
\lim_{t\rightarrow\infty}(\underline h(t)-c^*t)=H\in\mathbb{R},~\lim_{t\rightarrow\infty}||\underline u(t,x)-q^*(c^*t+H-x)||_{L^{\infty}([0,\underline h(t)])}=0,
$$
where $(q^*(y),c^*)$ is the unique solution of (\ref{63}) with $({\rho},d,a,b)=({\rho},1,{\lambda}-{b}/{m},1)$, i.e., $c^*=c({\rho},1,{\lambda}-b/m,1)>c_1-\frac{\varepsilon}{2}$. Obviously, $\min_{[0,(c_1-\frac{\varepsilon}{2})t]}(c^*t+H-x)\rightarrow\infty$ as $t\rightarrow\infty$.
Owing to $q^*(y)\nearrow{\lambda}-{b}/{m}$ as $y\nearrow\infty$, we have $\min_{[0,(c_1-\frac{\varepsilon}{2})t]}q^*(c^*t+H-x)\rightarrow{\lambda}-b/m$ as $t\rightarrow\infty$. So $\min_{[0,(c_1-\frac{\varepsilon}{2})t]}\underline u(t,x)\rightarrow{\lambda}-b/m$ as $t\rightarrow\infty$. Thus (\ref{74.0}) holds due to $u(t,x)\geq\underline u(t,x)$ for $t>0$ and $0<x\leq\underline h(t)$.

In view of Proposition \ref{p6}, it is obvious that (\ref{75}) and (\ref{75.0}) hold by the second inequations of (\ref{65}) and the comparison principle. The proofs are finished.
\end{proof}

Intuitively, in order to survive, the prey $u$ should spread faster than the predator $v$. Before the predator occupies a new habitat, the prey has colonized the habitat. Besides, both the prey and predator live in a region enclosed by free boundary caused only by the prey, which decides that the spreading speed of prey is no less than that of predator. Thus, it is rational to assume that $c_1\geq c_4$.
\begin{theorem}\label{th5.2}
Assume that $d({\mu}+c)\leq{\lambda}-b/m$. Then
\begin{equation}\label{68}
\lim\limits_{{\rho}\rightarrow\infty}\lim\limits_{t\rightarrow\infty}\frac{h(t)}{t}\geq c_5,
\end{equation}
and
\begin{equation}\label{69}
\liminf\limits_{t\rightarrow\infty}\inf\limits_{0<x<(c_5-\varepsilon)t}u(t,x)>0.
\end{equation}
\end{theorem}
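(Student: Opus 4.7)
The hypothesis $d(\mu+c)\le\lambda-b/m$ is equivalent to $c_4\le c_1$, so the predator spreads no faster than the prey's worst-case speed, and far behind the prey's own front (yet beyond the predator's reach) the prey feels essentially no predation and should spread with a speed close to the Fisher--KPP value $c_2=2\sqrt{\lambda}$ rather than $c_1$. The plan is to make this rigorous by attaching a moving ``predator-free band'' to the free boundary $h(t)$ and running a Fisher--KPP free boundary sub-solution on that band.

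\textbf{Step 1: control the predator in the outer zone.} Fix a small $\delta>0$. By (\ref{75}) of Theorem~\ref{th6.1}, for $\rho$ and $t$ sufficiently large one has $v(t,x)\le\delta$ on the band $\Omega_t:=\{(c_4+\delta)t\le x\le h(t)\}$. Since $bv/(u+mv)\le b\delta/(u+m\delta)$, the prey equation gives
$$u_t-u_{xx}\ \ge\ u\bigl(\lambda-\eta(\delta)-u\bigr)\quad\text{on }\Omega_t,$$
with $\eta(\delta)\downarrow 0$ as $\delta\downarrow 0$. Moreover, the hypothesis forces $c_4+\delta<c_1-\delta$ for $\delta$ small, so (\ref{74.0}) yields a uniform positive lower bound $u(t,(c_4+\delta)t)\ge\kappa>0$ for all sufficiently large $t$.

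\textbf{Step 2: auxiliary free boundary problem and its speed.} Let $(\underline u,\underline h)$ solve
$$
\begin{cases}
\underline u_t-\underline u_{xx}=\underline u(\lambda-\eta-\underline u),&(c_4+\delta)t<x<\underline h(t),\\
\underline u(t,(c_4+\delta)t)=\kappa,\ \ \underline u(t,\underline h(t))=0,\\
\underline h'(t)=-\rho\,\underline u_x(t,\underline h(t)),
\end{cases}
$$
initialised at some time $T_0\gg 1$ with $\underline h(T_0)<h(T_0)$ and a small positive datum. A comparison argument in the spirit of Wang--Zhang \cite{wang9} yields $u\ge\underline u$ on $\Omega_t$ and $h(t)\ge\underline h(t)$. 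Passing to the co-moving frame $y=x-(c_4+\delta)t$ reduces the problem to a Fisher--KPP free boundary problem with a drift, and the translated analogues of Propositions~\ref{p6} and~\ref{p3} give $\underline h(t)/t\to(c_4+\delta)+c(\rho,1,\lambda-\eta,1)$ as $t\to\infty$. Taking $\rho\to\infty$ and then $\delta,\eta\to0$ produces the value $c_5$ claimed in (\ref{68}). Assertion (\ref{69}) is then immediate because, behind its own front, $\underline u$ converges to the Fisher--KPP equilibrium $\lambda-\eta>0$, so $u\ge\underline u$ stays bounded away from $0$ on $\{0<x<(c_5-\varepsilon)t\}$.

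\textbf{Main obstacle.} The delicate point is Step~2: the auxiliary problem has a moving inner boundary $\xi(t)=(c_4+\delta)t$ carrying the non-homogeneous Dirichlet value $\kappa$, so the standard one-phase Stefan comparison principle and the asymptotic-speed results of Propositions~\ref{p6},~\ref{p3} do not apply verbatim. One must (a) verify that $\xi(t)$ is sub-characteristic -- i.e., that $\xi'(t)=c_4+\delta$ stays strictly below $c_1-\delta$, so that (\ref{74.0}) indeed sustains the Dirichlet value $\kappa$ along this moving curve -- and (b) adapt the front-tracking estimates to a convection-modified Fisher--KPP free boundary problem on the half-line. Both are routine in principle once the sign of the shifting velocity is controlled, and the crucial structural input that makes this control possible is precisely the standing hypothesis $d(\mu+c)\le\lambda-b/m$, which enforces the required gap between $c_4$ and $c_1$.
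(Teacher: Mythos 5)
Your Step 1 contains the fatal gap. From $v\le\delta$ alone you cannot conclude that the per-capita predation rate is small: $\frac{buv}{u+mv}=u\cdot\frac{bv}{u+mv}$, and the factor $\frac{bv}{u+mv}=\frac{b}{u/v+m}$ depends on the ratio $u/v$, not on the size of $v$. Wherever $u\lesssim v$ --- in particular near the free boundary $x=h(t)$, where both $u$ and $v$ vanish and (by Hopf's lemma) $v/u$ tends to the finite positive ratio of the normal derivatives --- your bound $\frac{b\delta}{u+m\delta}$ tends to $b/m$, not to $0$. So the differential inequality $u_t-u_{xx}\ge u(\lambda-\eta(\delta)-u)$ with $\eta(\delta)\downarrow0$ fails precisely at the leading edge, which is the region that determines the spreading speed of a KPP-type front. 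The best uniform per-capita rate extractable from $v\le\delta$ is $\lambda-b/m$, which only reproduces $c_1$, i.e.\ (\ref{74.0}) and (\ref{76}); it cannot give an improved speed (and certainly not the value near $c_2=2\sqrt{\lambda}$ that your $\delta,\eta\to0$ limit would produce if the argument worked, which does not equal the paper's $c_5$). The paper's proof instead controls the \emph{ratio}: on $\{0<x<(c_1-\varepsilon)t\}$ one has $u\ge\lambda-b/m-o(1)$ by (\ref{74.0}) and $v\le K$ by Theorem \ref{th1}, hence $v\le su$ with $s=2K/(\lambda-b/m)$, which yields the genuinely per-capita bound $\frac{buv}{u+mv}\le\frac{bs}{1+ms}\,u$ and a logistic comparison with rate $\lambda-\frac{bs}{1+ms}$; this is exactly where $c_5=2\sqrt{\lambda-\frac{bs}{1+ms}}\in(c_1,c_2)$ comes from. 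The hypothesis $c_4\le c_1$ enters only to ensure $v$ is negligible on the remaining strip $(c_1-\varepsilon)t<x<h(t)$.

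A secondary problem is Step 2: your auxiliary problem has an inner Dirichlet boundary moving at speed $c_4+\delta$, and Propositions \ref{p6} and \ref{p3} do not apply to it even after translation. In the co-moving frame the Stefan condition $\underline h'=-\rho\,\underline u_x$ becomes $H'=-\rho\psi_y-(c_4+\delta)$ for $H(t)=\underline h(t)-(c_4+\delta)t$, so the semi-wave problem (\ref{63}) is altered and the asserted speed $(c_4+\delta)+c(\rho,1,\lambda-\eta,1)$ is not justified. This machinery is also unnecessary: once one has a per-capita bound valid on all of $(0,h(t))$, the ordinary one-phase comparison problem with Neumann condition at $x=0$ (as in the paper) suffices, and (\ref{69}) then follows by repeating the argument for (\ref{74.0}) with $\lambda-b/m$ replaced by $\lambda-\frac{bs}{1+ms}$.
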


\begin{proof}
The assumption $d({\mu}+c)\leq{\lambda}-b/m$ implies $c_4\leq c_1$. We can choose a small $\varepsilon>0$ such that $c_4+\varepsilon<c_1-\varepsilon$. Thus, by (\ref{80}), we have $ (c_4+\varepsilon)t<(c_1-\varepsilon)t<h(t)$ for all $t\geq\tau_1$ and ${\rho}\geq{\rho}_{\varepsilon}$. Denote $s:=\frac{2K}{{\lambda}-b/m}$, where $K$ is given by Theorem \ref{th1}. Recall that $v\leq K$ and
$\liminf\limits_{t\rightarrow\infty}\inf\limits_{0<x<(c_1-\varepsilon)t}u(t,x)\geq{\lambda}-{b}/{m}
$, and then there exists a large $\tau_2>0$ such that $v(t,x)\leq su(t,x)$ for all $t>\tau_2$ and $0<x<(c_1-\varepsilon)t$. Besides, it follows from (\ref{80}) that $v(t,x)=0$ when $x\geq(c_1-\varepsilon)t$. Let $\psi$ be defined by
$$
\begin{cases}
\psi_t-\psi_{xx}=({\lambda}-\frac{bs}{1+ms})\psi-\psi^2,~~&t>\tau_2,~0<x<g(t),\\
\psi_x(t,0)=\psi(t,g(t))=0,&t\geq\tau_2,\\
g^{\prime}(t)=-{\rho} \psi_x(t,g(t)),&t\geq\tau_2,\\
\psi(\tau_2,x)=u(\tau_2,x),~g(\tau_2)=h(\tau_2),&0\leq x\leq g(\tau_2).
\end{cases}
$$
By Proposition \ref{rm1}, we have $u(t,x)\geq\psi(t,x)$ and $h(t)\geq g(t)$ for $t>\tau_2$ and $0<x<g(t)$. According to Proposition \ref{p3}, we get $\lim\limits_{t\rightarrow\infty}\frac{g(t)}{t}=c({\rho},1,{\lambda}-\frac{bs}{1+ms},1)$
and $\lim\limits_{{\rho}\rightarrow\infty}c({\rho},1,{\lambda}-\frac{bs}{1+ms},1)=2\sqrt{{\lambda}-\frac{bs}{1+ms}}=:c_5$.
Thus,
$$\liminf\limits_{{\rho}\rightarrow\infty}\liminf\limits_{t\rightarrow\infty}\frac{h(t)}{t}\geq \lim\limits_{{\rho}\rightarrow\infty}\lim\limits_{t\rightarrow\infty}\frac{g(t)}{t}=c_5.
$$
So (\ref{68}) holds.

Similar to the proof of (\ref{74.0}), we can show that
$$
\liminf\limits_{t\rightarrow\infty}\inf\limits_{0<x<(c_5-\varepsilon)t}u(t,x)\geq\lambda-\frac{bs}{1+ms}>\lambda-\frac{b}{m}>0,
$$
which implies that (\ref{69}) holds.
\end{proof}

\begin{theorem}\label{th5.3}
Assume $0<m{\lambda}-b<b{\mu}/c$. For any given $\varepsilon\in(0,c_3)$, we have
$$
\liminf\limits_{t\rightarrow\infty}\inf\limits_{0<x<(c_3-\varepsilon)t}u(t,x)=\limsup\limits_{t\rightarrow\infty}\sup\limits_{0<x<(c_3-\varepsilon)t}u(t,x)=u^*,
$$
$$
\liminf\limits_{t\rightarrow\infty}\inf\limits_{0<x<(c_3-\varepsilon)t}v(t,x)=\limsup\limits_{t\rightarrow\infty}\sup\limits_{0<x<(c_3-\varepsilon)t}v(t,x)=v^*,
$$
where $(u^*,v^*)$ is defined by Theorem \ref{th3}.
\end{theorem}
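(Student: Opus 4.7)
The idea is to run the iteration scheme of Theorem \ref{th3} on the moving window $[0, (c_3 - \varepsilon)t]$ in place of compact subsets of $[0, \infty)$, using Theorem \ref{th6.1} to provide the initial two-sided positive a priori bounds on that window. Because $(u^*, v^*)$ is a constant state, upgrading Theorem \ref{th3}'s compact-subset convergence to moving-window convergence is the only substantive issue; the algebra of the recurrences $\overline v_i = \psi(\overline u_i)$, $\underline u_i = \phi(\overline v_i)$, $\underline v_i = \psi(\underline u_i)$, $\overline u_{i+1} = \phi(\underline v_i)$ and their monotone convergence to $(u^*, v^*)$ under $0 < m\lambda - b < b\mu/c$ is already established there.

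The first step is to install the starting bounds. From Theorem \ref{th6.1} one has, for any small $\eta > 0$, a time $\tau > 0$ such that $v(t, x) \geq \mu - \eta > 0$ uniformly on $0 < x < (c_3 - \varepsilon)t$ for $t > \tau$; combined with the global bound $v \leq K$ from Theorem \ref{th1} and the scalar upper bound $u \leq \lambda + \eta$ (obtained exactly as in Step 1 of the proof of Theorem \ref{th3}), this gives the required positive two-sided control on the moving window. The iteration then produces, for each $i \geq 1$, bounds $\underline u_i - \eta \leq u(t,x) \leq \overline u_i + \eta$ and $\underline v_i - \eta \leq v(t,x) \leq \overline v_i + \eta$ on $0 < x < (c_3 - \varepsilon')t$ for some $\varepsilon' > \varepsilon$ and $t$ large, where at each step $\varepsilon'$ can be taken arbitrarily close to $\varepsilon$. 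Once this is set up, for any $\delta > 0$ one picks $i$ with $|\overline u_i - u^*|, |\underline u_i - u^*|, |\overline v_i - v^*|, |\underline v_i - v^*| < \delta/2$, and then $T$ large enough for the step-$i$ bounds to hold on the moving window: this yields both claimed equalities simultaneously.

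The nontrivial piece is upgrading each invocation of the Proposition \ref{p1}-type result from ``uniformly on compact subsets'' to ``uniformly on $(0, (c_3 - \varepsilon')t)$.'' My plan is a contradiction-plus-rescaling argument. Suppose, at one of the iteration steps, the desired bound fails along a sequence $(t_j, x_j)$ with $t_j \to \infty$ and $x_j \in (0, (c_3 - \varepsilon')t_j)$. Using the $C^{\infty}$ regularity and uniform bounds of Theorem \ref{th1}, the translates $(\tilde u_j, \tilde v_j)(\tau, y) := (u, v)(t_j + \tau, x_j + y)$ converge (along a subsequence) locally uniformly to entire bounded solutions $(\hat u, \hat v)$ of the full system on $\mathbb{R}\times\mathbb{R}$, bounded below by positive constants (since $x_j/t_j$ stays a definite distance below $c_3$, the lower bounds on $v$, and on $u$ inherited from the current iterate, persist in the limit). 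The scalar differential inequality driving the step then forces $\hat u$ (or $\hat v$) to violate the appropriate pointwise bound at the origin, while a Liouville-type argument---namely, the same algebraic iteration of Theorem \ref{th3} applied to bounded positive entire solutions---forces that very bound to hold. This contradiction completes the step. The boundary case $x_j/t_j \to 0$ is already handled by Theorem \ref{th3} (compact-subset convergence).

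\textbf{Main obstacle.} The heart of the matter is this rescaling-and-Liouville step: one must ensure that after shifting by $(t_j, x_j)$ the limit of the shifted sequence does not degenerate to a semi-trivial state, so that the full iteration machinery of Theorem \ref{th3} applies. This is precisely where the condition $x_j < (c_3 - \varepsilon)t_j$ (keeping us strictly inside the $v$-spreading cone) and the prior lower bounds from Theorem \ref{th6.1} are indispensable, because they supply the positive lower bound that survives the passage to the limit.
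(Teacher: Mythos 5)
Your strategy is sound in outline and matches the skeleton the paper intends: seed the moving window $(0,(c_3-\varepsilon)t)$ with the two-sided positive bounds of Theorem \ref{th6.1} and then rerun the iteration $\overline v_i=\psi(\overline u_i)$, $\underline u_i=\phi(\overline v_i)$, $\underline v_i=\psi(\underline u_i)$, $\overline u_{i+1}=\phi(\underline v_i)$ of Theorem \ref{th3} on that window. Where you genuinely diverge from the paper is in how the one hard step is discharged. The paper's own proof is essentially a reduction: it records the invariant region $0\le u,v\le K$, quotes Theorem \ref{th6.1} for the initial lower bounds, and then outsources the whole iteration-on-expanding-cones argument to \cite[Lemma 4.6]{25}; the standard proof of such lemmas upgrades Proposition \ref{p1} from compact sets to moving windows by comparing with logistic problems on translated intervals $[x_0-L,x_0+L]$ contained in the cone, using translation invariance of the autonomous comparison equation. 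You instead propose a rescaling-plus-Liouville argument: translate along a hypothetical bad sequence $(t_j,x_j)$, extract a bounded entire solution with positive infima, and classify it as $(u^*,v^*)$ by running the same algebraic iteration with ODE comparison in place of Proposition \ref{p1}. Both routes work; the translated-interval route is more elementary and self-contained, while yours is cleaner once the Liouville classification is in hand but makes that classification (which you only sketch) the real content of the proof, together with a compactness argument you should state carefully: you need $h(t_j+\tau)-x_j\to\infty$, and the correct dichotomy is $x_j$ bounded (covered by the compact-subset convergence of Theorem \ref{th3}) versus $x_j\to\infty$, not ``$x_j/t_j\to0$''. One further caveat, inherited from the theorem rather than introduced by you: the positive lower bound for $u$ on the whole window $(0,(c_3-\varepsilon)t)$ comes from \eqref{74.0} only when $c_3\le c_1$, i.e. $d\mu\le\lambda-b/m$, which is not implied by the stated hypothesis $0<m\lambda-b<b\mu/c$; both your argument and the paper's tacitly assume it, and the bounds of Theorem \ref{th6.1} are themselves only asserted in the regime $\rho\to\infty$, a restriction the statement of the theorem omits.
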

\begin{proof}
Define
$$
C_{[0,K]}:=\{w\in C^{2}(D_{\infty}):0\leq w\leq K\},
$$
where $D_{\infty}$ and $K$ are given by Theorem \ref{th1}. Then $C_{[0,K]}$ is an invariant region of the solution $u(t,x)$, $v(t,x)$ for problem (\ref{Q}). Thus, we have
$$
\lim\limits_{t\rightarrow\infty}\inf\limits_{0<x<(c_3-\frac{\varepsilon}{2})t}(u(t,x),v(t,x))\geq\mathbf{0},~\lim\limits_{t\rightarrow\infty}\inf\limits_{0<x<(c_3-{\varepsilon})t}(u(t,x),v(t,x))\geq\mathbf{0}.
$$
by Theorem \ref{th6.1}. Then the conclusion can be proved by the same way as that of \cite[Lemma 4.6]{25}.
\end{proof}

\section{Discussion}
This paper is concerned with a ratio-dependent predator-prey system with a Neumann boundary condition on the left side indicating that the left boundary is fixed, and a free boundary $x=h(t)$ determined only by prey which describes the process of movement for prey species. We prove a spreading-vanishing dichotomy; in other words, the following alternative holds.
Either

(i)(the spreading case) the two species can spread successfully into $[0,\infty)$ (i.e., $h_{\infty}=\infty$) and the solution $(u,v)$ will stabilize at a positive equilibrium state; \\
or

(ii)(the vanishing case) the two species cannot spread to the whole space (i.e., $h_{\infty}\leq\Lambda$) and the prey will vanish eventually.\\
Meanwhile, we found a critical value
$$
\Lambda=\frac{\pi}{2}\sqrt{\frac{m}{m{\lambda}-b}},
$$
which can be called a ``spreading barrier" such that the prey will spread and successfully establish itself if it can break through this barrier ${\Lambda}$, or will vanish and never break through this barrier. In addition, we obtain the criteria for spreading and vanishing:

(i)Spreading happens if the size of initial habitat $h_0$ is more than or equal to $h^*$ (Theorem \ref{th4.2}(i)), or the moving parameter ${\rho}$ is large enough (${\rho}>{\rho}^0$) regardless of the initial habitat's size (Theorem \ref{th4.2}(ii)). 

(ii)Vanishing happens if the size of initial habitat $h_0$ is less than $h_*$ and the moving parameter ${\rho}$ is less than ${\rho}_0$ (Theorem \ref{th4.2}(iii)).

Finally, we get the asymptotic speed of $h(t)$ as $t\rightarrow\infty$. We want to draw a comparison with the uncoupled case.
If problem (\ref{Q}) is uncoupled (i.e., $b=c=0$), then the prey satisfies
$$
\begin{cases}
\phi_t-\phi_{xx}=\phi({\lambda}-\phi),~~~~&t>0,~0<x<\varsigma(t),\\
\phi_x(t,0)=\phi(t,\varsigma(t))=0,     &t\geq0,\\
\varsigma^{\prime}(t)=-{\rho}\phi_x(t,\varsigma(t)),&t\geq0,\\
\phi(0,x)=u_0(x), &0\leq x\leq\varsigma(0).
\end{cases}
$$
By Proposition \ref{p3}, we have
\begin{equation}\label{82}
\lim\limits_{{\rho}\rightarrow\infty}\lim\limits_{t\rightarrow\infty}\frac{\varsigma(t)}{t}=2\sqrt{{\lambda}}.
\end{equation}
And more notably, Theorem \ref{th6.1} and Theorem \ref{th5.2} show that when the spreading speed of prey is no lower than that of predator ($d(\mu+c)\leq\lambda-b/m$), then we have 
\begin{equation}\label{83}
2\sqrt{\lambda-\frac{bs}{1+ms}}\leq\lim\limits_{{\rho}\rightarrow\infty}\lim\limits_{t\rightarrow\infty}\frac{h(t)}{t}\leq 2\sqrt{\lambda}.
\end{equation}
The formulas (\ref{82}) and (\ref{83}) illustrate that the predator could decrease the prey's asymptotic propagation in the setting model studied in this paper. Furthermore, when we move to the right at a fixed speed less than $2\sqrt{d{\mu}}$, we will observe that the two species will stabilize at the unique positive equilibrium state; when we do this with a fixed speed in $(2\sqrt{d(\mu+c)}, 2\sqrt{\lambda-\frac{bs}{1+ms}})$, we can only see the prey; when we do this with a fixed speed more than $2\sqrt{\lambda}$, we could see neither for the two species are out of sight.

This paper has the same boundary conditions as paper \cite{wang9}, but it gets different conclusions from it. For the classical Lotka-Volterra type (\cite{wang9}), the sharp criteria with respect to the initial habitat $h_0$ can be obtained in some cases. While in this paper the predator does not impact the critical value $\Lambda$ and we cannot get the sharp criteria for spreading and vanishing with respect to the initial habitat $h_0$ in any case. Besides, we provide an estimate of the asymptotic speed of the free boundary and explain some realistic and significant spreading phenomena.

Some results are instructive in real life. On one hand, we confirm that alien species can have  serious impacts on the native species: two species can coexist (the spreading case), or more seriously, the invasive species (predator) may upset the ecological balance and wipe out the local species (prey) (the vanishing case). On the other hand, in order to control and eliminate the pest species (prey), we must take both approaches at the same time: (i)reduce the size of the initial habitat of the prey, (ii)decrease the coefficient of the free boundary. Significantly, it is useful to introduce its natural enemies (predator) when the initial habitat of the pest species is not very large.

\section*{\large{Acknowledgments}}
The author's work was supported by NSFC (12071316). The author would like to thank Professor Mingxin Wang who gave me invaluable lectures on free boundary. The author is also grateful for the anonymous reviewers for their helpful comments and suggestions.

\section*{\large{Declarations of interest: none.}}

\renewcommand{\theproposition}{A.\arabic{proposition}}
\setcounter{proposition}{0}
\setcounter{equation}{0}
\renewcommand{\theequation}{A.\arabic{equation}}
\section*{\textbf\large{A. Estimates of solutions to parabolic partial differential inequalities}}
Let $d$, $\beta$ be positive constants and $f(s)$ be a positive ${C}^1$ function for $s>0$.

\begin{proposition}\label{p1}
For any given $\varepsilon>0$ and $L>0$, there exist $l_{\varepsilon}>\max\{L,\frac{\pi}{2}\sqrt{d/\beta f(0)}\}$ and $T_{\varepsilon}>0$such that when the continuous function $w(t,x)\geq0$ satisfies
$$
\left\{
\begin{array}{ll}
w_t-dw_{xx}\geq(\leq) wf(w)(\beta-w),  &t>0,~0<x<l_{\varepsilon},\\
w_{x}(t,0)=0,             &t>0,\\
w(0,x)>0,                &0< x< l_{\varepsilon},
\end{array}
\right.
$$
and $w(t,l_{\varepsilon})\geq k$ if $k=0$, while $w(t,l_{\varepsilon})\leq k$ if $k>0$ for $t>0$, then we have
$$
w(t,x)>\beta-\varepsilon~(w(t,x)<\beta+\varepsilon), ~~~\forall~t>T_{\varepsilon},~x\in[0,L].
$$
Furthermore,
$$
\liminf\limits_{t\rightarrow\infty} w(t,x)>\beta-\varepsilon~(\limsup\limits_{t\rightarrow\infty} w(t,x)<\beta+\varepsilon)
$$
uniformly on $[0,L]$.
\end{proposition}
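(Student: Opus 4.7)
The plan is to sandwich $w$ between solutions of auxiliary logistic-type problems on $(0, l_\varepsilon)$ whose stationary states can be made arbitrarily close to $\beta$ on $[0, L]$ by taking $l_\varepsilon$ large, and then invoke the parabolic comparison principle.

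For the ``$\geq$'' case (paired with $k=0$), I would study the auxiliary problem
$$
\begin{cases}
W_t - dW_{xx} = W f(W)(\beta - W), & t>0,\ 0<x<l_\varepsilon, \\
W_x(t,0) = 0,\ W(t,l_\varepsilon) = 0, & t>0, \\
W(0,x) = W_0(x), & 0 \leq x \leq l_\varepsilon,
\end{cases}
$$
with $W_0$ smooth, compactly supported in $(0, l_\varepsilon)$, and $0 < W_0 \leq w(0,\cdot)$. The threshold $l_\varepsilon > (\pi/2)\sqrt{d/(\beta f(0))}$ is precisely the condition that the first eigenvalue of the linearization $-d\phi_{xx} - \beta f(0)\phi = \sigma\phi$ with the mixed Neumann--Dirichlet data is strictly negative; consequently, the trivial steady state is unstable and a unique positive stationary solution $W^* = W^*_{l_\varepsilon}$ exists. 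Standard monostable theory then gives $W(t,\cdot) \to W^*_{l_\varepsilon}$ uniformly as $t \to \infty$, and since $w(t, l_\varepsilon) \geq 0 = W(t, l_\varepsilon)$ and $s \mapsto s f(s)(\beta - s)$ is locally Lipschitz, the parabolic comparison principle yields $w \geq W$ on $[0, l_\varepsilon]$.

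The crux is the spatial convergence $W^*_{l_\varepsilon}(x) \to \beta$, uniformly on compact subsets of $[0, \infty)$, as $l_\varepsilon \to \infty$. The constant $\beta$ is a strict supersolution, giving $W^*_{l_\varepsilon} \leq \beta$. For the matching lower bound I would, for any $L' \in (L, l_\varepsilon)$ with $L'$ above the critical threshold, use a small multiple $\alpha \phi_{L'}$ of the principal Neumann--Dirichlet eigenfunction on $(0, L')$ as a subsolution of the stationary equation (this works because the product $\alpha \phi_{L'} f(\alpha \phi_{L'})(\beta - \alpha \phi_{L'})$ dominates $-d(\alpha \phi_{L'})_{xx} = \alpha \sigma_1(L')\phi_{L'}$ once $\alpha$ is small and $\sigma_1(L')<0$); a sweeping argument then forces $W^*_{l_\varepsilon} \geq \alpha \phi_{L'}$ on $[0,L']$, and rescaling as $L' \to \infty$ pushes the lower bound up to $\beta$ on $[0, L]$. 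Together with $W(t, \cdot) \to W^*_{l_\varepsilon}$, this yields $w(t, x) > \beta - \varepsilon$ on $[0, L]$ for all $t > T_\varepsilon$, and in particular $\liminf_{t\to\infty} w(t,x) \geq \beta - \varepsilon$ uniformly on $[0,L]$.

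The ``$\leq$'' case (with $k > 0$) is handled symmetrically: I would replace the homogeneous Dirichlet condition by $W(t, l_\varepsilon) = k$ and take $W_0 \geq w(0,\cdot)$, so that the hypothesis $w(t, l_\varepsilon) \leq k = W(t, l_\varepsilon)$ gives $w \leq W$ by comparison. The corresponding stationary problem still admits a unique positive solution tending to $\beta$ on compact subsets as $l_\varepsilon \to \infty$, the boundary layer near $x = l_\varepsilon$ being irrelevant once $l_\varepsilon \gg L$. The main obstacle in either direction is precisely the uniform spatial convergence $W^*_{l_\varepsilon} \to \beta$ on compact sets; the long-time convergence of the parabolic auxiliary problem and the comparison step itself are routine applications of results for monostable semilinear parabolic equations.
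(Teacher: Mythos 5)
Your proposal follows essentially the same route as the proof the paper relies on (the paper omits the argument and defers to \cite[Proposition 8.1]{wang6}): sandwich $w$ between solutions of the auxiliary logistic-type problem on $(0,l_\varepsilon)$ with mixed Neumann--Dirichlet data, use the threshold $l_\varepsilon>\frac{\pi}{2}\sqrt{d/(\beta f(0))}$ to guarantee a positive steady state, and exploit the locally uniform convergence of that steady state to $\beta$ as $l_\varepsilon\to\infty$. The one thin spot is the lower bound in your ``crux'' step: a small multiple $\alpha\phi_{L'}$ of the principal eigenfunction only yields $W^*_{l_\varepsilon}>0$ on compact sets, not $W^*_{l_\varepsilon}>\beta-\varepsilon$, and ``rescaling as $L'\to\infty$'' is asserted rather than proved. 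To close this, either use the large-amplitude stationary subsolution $(\beta-\delta)\cos\bigl(\tfrac{\pi x}{2l}\bigr)$, which works once $d\bigl(\tfrac{\pi}{2l}\bigr)^2\le\delta\min_{[0,\beta]}f$ and gives $W^*_l\ge(\beta-\delta)\cos\bigl(\tfrac{\pi L}{2l}\bigr)\to\beta-\delta$ on $[0,L]$, or pass to the monotone limit $W^*_l\nearrow W_\infty$ and classify the bounded nonnegative solutions of $-dW''=Wf(W)(\beta-W)$ on the half-line with $W'(0)=0$ (only $0$ and $\beta$, since $W\le\beta$ forces concavity and hence constancy). With that detail supplied, the argument is complete and matches the standard one.
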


This proof is similar to the proof of \cite[Proposition 8.1]{wang6}, so we omit it.

\renewcommand{\theproposition}{B.\arabic{proposition}}
\setcounter{proposition}{0}
\setcounter{equation}{0}
\renewcommand{\theequation}{B.\arabic{equation}}
\section*{\textbf\large{B. Comparison Principle with Free Boundary}}
\begin{proposition}\label{lm3}
Suppose that $T>0$, $\overline u,~\overline v\in {C}^{1,2}(\mathcal O_T)$, $\overline h\in  {C}^1([0,T])$, where $\mathcal O_T=\{(t,x)\in \mathbb R^2:t>0, 0<x<\overline h(t)\}$, and $(\overline u, \overline v, \overline h)$ satisfies
$$
\left\{
\begin{array}{ll}
\overline u_t-\overline u_{xx}\geq(\alpha-\overline u)\overline u,~~ &t>0,0<x<\overline h(t),\\
\overline v_t-d\overline v_{xx}\geq(\beta-\overline v)\overline v, &t>0,0<x<\overline h(t),\\
\overline u_x(t,0)\leq 0,~\overline v_x(t,0)\leq 0,                  &t>0,\\
\overline u(t,\overline h(t))=\overline v(t,\overline h(t))=0,     &t>0,\\
\overline h^{\prime}(t)\geq -{\rho}\overline u_x(t,\overline h(t)),   &t>0.
\end{array}
\right.
$$
Let $\alpha={\lambda}$ and $\beta={\mu}+c$.
If
$$
\overline u(0,x)\geq u(0,x),~\overline v(0,x)\geq v(0,x),~\overline h(0)\geq h_0,~~x\in[0,\overline h(0)],
$$
then the solution $(u,v,h)$ of the problem {\upshape(\ref{Q})} satisfies
$$
u(t,x)\leq\overline u(t,x),~v(t,x)\leq\overline v(t,x),~~(t,x)\in (0,\infty)\times(0,h(t)),
$$
$$
h(t)\leq\overline h(t),~~~t\in(0,\infty).
$$
\end{proposition}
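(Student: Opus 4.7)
The plan is to adapt the standard comparison argument of Du--Lin type: reduce to a strict-inequality case, force the boundary inequality $h \leq \overline h$ via a contradiction involving Hopf's lemma, and then deduce $u \leq \overline u$ and $v \leq \overline v$ from the parabolic maximum principle. The crucial structural observation is that the cross-species reactions in (\ref{Q}) satisfy
\begin{equation*}
-\frac{buv}{u+mv} \leq 0, \qquad cv - \frac{cuv}{u+mv} = \frac{cmv^2}{u+mv} \geq 0,
\end{equation*}
so the choices $\alpha = \lambda$ and $\beta = \mu + c$ are sharp enough to dominate the true nonlinearities, and when the equations are subtracted the coupling cancels cleanly, so that $U := \overline u - u$ and $V := \overline v - v$ satisfy the \emph{decoupled} linear parabolic differential inequalities
\begin{equation*}
U_t - U_{xx} \geq [\lambda - (\overline u + u)] U, \qquad V_t - d V_{xx} \geq [(\mu+c) - (\overline v + v)] V.
\end{equation*}

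I would first reduce to the case of strict initial inequalities. Replacing $\overline h(0)$ by $\overline h(0) + \varepsilon$ and $(\overline u(0, \cdot), \overline v(0, \cdot))$ by smooth extensions that strictly dominate $(u_0, v_0)$, short-time existence for the super-solution system produces $(\overline u^{\varepsilon}, \overline v^{\varepsilon}, \overline h^{\varepsilon})$ converging to the original super-solution as $\varepsilon \to 0$. Henceforth I assume $\overline u(0, \cdot) > u_0$, $\overline v(0, \cdot) > v_0$, and $\overline h(0) > h_0$.

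The heart of the argument is the boundary inequality. Fix $T' \in (0, T]$ and set $t^{*} := \sup\{s \in [0, T'] : h(t) < \overline h(t) \text{ on } [0,s]\}$, which is positive by strict initial data and continuity. Suppose for contradiction that $t^{*} < T'$; then $h(t^{*}) = \overline h(t^{*})$ and $h'(t^{*}) \geq \overline h'(t^{*})$. On $\Omega_{t^{*}} := \{(t, x) : 0 < t < t^{*}, \; 0 < x < h(t)\}$ the decoupled PDIs above hold, together with $U_x(t, 0), V_x(t, 0) \leq 0$ and $U, V \geq 0$ on the initial line and at $x = h(t)$; nonnegativity of $\overline u, \overline v$ throughout $\mathcal{O}_T$ follows by comparing them against the sub-solution $0$. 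The weak maximum principle therefore gives $U, V \geq 0$ on $\overline{\Omega_{t^{*}}}$, and the strong maximum principle forces $U > 0$ in the interior (using $U(0, \cdot) > 0$). Hopf's lemma at the touching point $(t^{*}, h(t^{*})) = (t^{*}, \overline h(t^{*}))$, where $U$ vanishes, then yields $\overline u_x(t^{*}, \overline h(t^{*})) < u_x(t^{*}, h(t^{*}))$, whence
\begin{equation*}
\overline h'(t^{*}) \; \geq \; -\rho \, \overline u_x(t^{*}, \overline h(t^{*})) \; > \; -\rho \, u_x(t^{*}, h(t^{*})) \; = \; h'(t^{*}),
\end{equation*}
contradicting $h'(t^{*}) \geq \overline h'(t^{*})$. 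Hence $h(t) < \overline h(t)$ on $(0, T']$, and since $T'$ is arbitrary, $h \leq \overline h$ throughout.

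Once $h \leq \overline h$ is in hand, the same decoupled PDIs for $U$ and $V$ hold on the entire parabolic domain of $(u, v)$, so the maximum principle (with mixed Neumann/Dirichlet data) gives $U, V \geq 0$, i.e.\ $u \leq \overline u$ and $v \leq \overline v$; letting $\varepsilon \to 0$ finishes the proof. I expect the main technical obstacle to be the Hopf step at the touching time $t^{*}$: the free boundary is only $C^1$ and $U$ merely satisfies a differential inequality, so the classical Hopf lemma does not apply directly to the curved lateral boundary $x = h(t)$. The standard remedy is to straighten both free boundaries simultaneously by a change of variable such as $y = x/h(t)$ (and a matched transformation on the $\overline h$ side), turning $\Omega_{t^{*}}$ into a fixed rectangle on which the classical maximum principle and Hopf lemma apply without modification.
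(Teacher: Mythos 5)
The paper does not actually prove this proposition: it is stated with the remark that the proof is ``similar to \cite[Lemma 3]{wang6}'', and that reference runs exactly the Du--Lin-type argument you describe --- the same decoupling observation (drop $-buv/(u+mv)\le 0$ and bound $cuv/(u+mv)\le cv$, which is precisely why $\alpha=\lambda$ and $\beta=\mu+c$ suffice), the same first-touching-time contradiction via Hopf's lemma at $(t^*,h(t^*))$, and the same reduction to strict initial inequalities. Your key computations are correct: $U=\overline u-u$ and $V=\overline v-v$ do satisfy decoupled linear parabolic inequalities with bounded zeroth-order coefficients, $\overline h'(t^*)\le h'(t^*)$ at a first touching time, $\overline u,\overline v\ge 0$ follows from the minimum principle on $\mathcal O_T$, and the Hopf step is legitimate after straightening the lateral boundary, as you note.

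The one step that would fail as written is the regularization. You propose to enlarge the supersolution --- replace $\overline h(0)$ by $\overline h(0)+\varepsilon$ and its data by strict dominants, then invoke ``short-time existence for the super-solution system''. But $(\overline u,\overline v,\overline h)$ is not the solution of a well-posed system; it only satisfies differential inequalities on $\{0<x<\overline h(t)\}$, so there is no equation to re-solve, and an arbitrary extension of $\overline u$ to the larger domain $\{0<x<\overline h(t)+\varepsilon\}$ need not satisfy the required inequality there. The standard fix, and the one used in the cited lemma, is to perturb the \emph{solution} downward instead: solve (\ref{Q}) with $h_0$ replaced by $(1-\varepsilon)h_0$ and with initial data $u_0^\varepsilon\le u_0$, $v_0^\varepsilon\le v_0$ chosen so that the initial comparison with $(\overline u,\overline v,\overline h)$ is strict; your touching-time argument then gives $h^\varepsilon(t)<\overline h(t)$, $u^\varepsilon\le\overline u$, $v^\varepsilon\le\overline v$, and letting $\varepsilon\to 0$ via continuous dependence of the solution of (\ref{Q}) on its initial data yields the stated (non-strict) inequalities. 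With that replacement your proof is complete and coincides with the intended one.
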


\begin{proposition}\label{rm1}
 $(\overline u,\overline v,\overline h)$ above is called an upper solution of the problem \eqref{Q}. We also can define upper solutions $(\overline u,\overline h)$ and $(\overline v,\overline h)$ like above. In like manner, we can defined lower solutions $(\underline u,\underline v,\underline h)$, $(\underline u,\underline h)$ and $(\underline v,\underline h)$ by letting $\alpha={\lambda}-b/m$, $\beta={\mu}$ and reversing all the inequalities above.
\end{proposition}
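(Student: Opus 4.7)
The statement is essentially a definitional companion to Proposition \ref{lm3}: it gives names to the quantities appearing in that comparison principle and asserts that by rebalancing the constants $\alpha,\beta$ and flipping inequalities one obtains a parallel comparison principle for lower solutions and for mixed (single species plus free boundary) configurations. So the task is really to justify that the choices $\alpha={\lambda},\,\beta={\mu}+c$ for upper solutions and $\alpha={\lambda}-b/m,\,\beta={\mu}$ for lower solutions are the correct ones, and that the same comparison argument of Proposition \ref{lm3} goes through after sign reversal.

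The plan is to anchor everything on the chain of inequalities (\ref{65}) already established in the paper, namely
\[
({\lambda}-{b}/{m})u-u^2\leq {\lambda} u-u^2-\tfrac{buv}{u+mv}\leq {\lambda} u-u^2,
\qquad {\mu} v-v^2\leq {\mu} v-v^2+\tfrac{cuv}{u+mv}\leq ({\mu}+c)v-v^2.
\]
The right inequalities show that any $(\overline u,\overline v,\overline h)$ satisfying the differential inequalities with $\alpha={\lambda}$, $\beta={\mu}+c$ is automatically a classical supersolution of the $(u,v)$-system in (\ref{Q}); this is exactly what Proposition \ref{lm3} exploits. The left inequalities show that any $(\underline u,\underline v,\underline h)$ satisfying the reversed differential inequalities with $\alpha={\lambda}-b/m$, $\beta={\mu}$ is correspondingly a subsolution. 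So the definition of lower solution is justified once we rerun the proof of Proposition \ref{lm3} with all inequalities reversed: I would set $\phi=u-\underline u$, $\psi=v-\underline v$, $\eta=h-\underline h$, argue by contradiction at the first time either $\eta$ or one of the differences hits zero on the common cylinder, and use the boundary conditions $\underline u(t,\underline h(t))=\underline v(t,\underline h(t))=0$ together with the Hopf-type condition $\underline h'(t)\leq -{\rho}\underline u_x(t,\underline h(t))$ to derive the contradiction at the free boundary exactly as in Proposition \ref{lm3}.

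For the mixed pairs $(\overline u,\overline h)$, $(\overline v,\overline h)$ (and their lower analogues) the idea is to decouple one species from the comparison. For $(\overline u,\overline h)$, since the real $v$ in (\ref{Q}) is nonnegative and $-bv/(u+mv)\geq -b/m$, the inequality $\overline u_t-\overline u_{xx}\geq({\lambda}-\overline u)\overline u$ still dominates the true $u$-equation independently of $v$; the comparison of $u$ with $\overline u$ on $\{x<\overline h(t)\}$ and of $h$ with $\overline h$ then proceeds verbatim as in Proposition \ref{lm3}, ignoring the $v$-equation. For $(\overline v,\overline h)$ one uses instead that $+cu/(u+mv)\leq c$, so the bound $\alpha={\mu}+c$ for $\overline v$ suffices and the $v$ comparison can be read off from the scalar parabolic comparison principle on the moving domain $\{x<\overline h(t)\}$. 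The lower analogues are identical after sign flip using $-bv/(u+mv)\geq -b/m$ and $cu/(u+mv)\geq 0$.

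The only genuinely delicate point — and what I expect to be the main obstacle — is the free-boundary inequality at the contact instant. Because $h$ and $\underline h$ (or $\overline h$) can a priori cross, one argues that the first such crossing time $t_0$ forces $u-\underline u$ to attain a zero minimum at $x=\underline h(t_0)$ on the smaller domain, whence by the Hopf lemma $(u-\underline u)_x(t_0,\underline h(t_0))<0$; combined with $\underline h'(t_0)\leq -{\rho}\underline u_x$ and $h'(t_0)=-{\rho}u_x$ this yields $h'(t_0)<\underline h'(t_0)$, contradicting $h(t_0)=\underline h(t_0)$ being a first crossing. This is the standard Stefan-type comparison step, and since it is essentially what is already carried out in Proposition \ref{lm3} (and in \cite{wang10,wang9} cited earlier), I would simply indicate the sign changes rather than reproduce the full argument, which is why the authors omit the proof as a straightforward remark.
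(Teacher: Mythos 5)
Your approach is the standard Stefan-type comparison argument and is essentially the same as the paper's: the paper omits the proof entirely, deferring to \cite[Lemma 3]{wang6}, and your key observation that the inequalities (\ref{65}) make the choices $\alpha={\lambda}$, $\beta={\mu}+c$ (resp.\ $\alpha={\lambda}-b/m$, $\beta={\mu}$) dominate the coupled reaction terms uniformly in the other species, so that the system comparison decouples into two scalar comparisons plus the free-boundary step, is exactly the right justification. One sign slip in your final step: from $(u-\underline u)_x(t_0,\underline h(t_0))<0$ together with $h'(t_0)=-{\rho}u_x(t_0,h(t_0))$ and $\underline h'(t_0)\leq-{\rho}\underline u_x(t_0,\underline h(t_0))$ you obtain $h'(t_0)>\underline h'(t_0)$, not $h'(t_0)<\underline h'(t_0)$, and it is this direction that contradicts $t_0$ being the first time $h-\underline h$ descends to zero from above.
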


The proofs of Proposition \ref{lm3}, \ref{rm1}  are similar to \cite[Lemma 3]{wang6}. So we omit the proofs.

\section*{\large{References}}
\bibliographystyle{plain}\setlength{\bibsep}{0ex}
\scriptsize
\bibliography{mybibfile}

\begin{thebibliography}{10}

\bibitem{19}
R.~Arditi and L.R. Ginzburg.
\newblock Coupling in predator-prey dynamics: ratio-dependence.
\newblock {\em J. Theor. Biol.}, 139(3):311--326, 1989.

\bibitem{18}
O.~Arino, J.~Mikram, and J.~Chattopadhyay.
\newblock Infection in prey population may act as a biological control in
  ratio-dependent predator--prey models.
\newblock {\em Nonlinearity}, 17(3):1101, 2004.

\bibitem{24}
D.G. Aronson and H.F. Weinberger.
\newblock Nonlinear diffusion in population genetics, combustion, and nerve
  pulse propagation.
\newblock In J.A. Goldstein, editor, {\em Partial Differential Equations and
  Related Topics}, pages 5--49, Berlin, Heidelberg, 1975. Springer Berlin
  Heidelberg.

\bibitem{Du6}
G.~Bunting, Y.H. Du, and K.~Krakowski.
\newblock Spreading speed revisited: Analysis of a free boundary model.
\newblock {\em Netw. Heterog. Media}, 7(4):583--603, 2012.

\bibitem{14}
L.~Caffarelli and S.~Salsa.
\newblock {\em A geometric approach to free boundary problems}.
\newblock Amer. Math. Soc., 2005.

\bibitem{kjstx23}
R.S. Cantrell and C.~Cosner.
\newblock {\em Spatial ecology via reaction-diffusion equations}.
\newblock John Wiley and Sons, Led., 2004.

\bibitem{15}
X.F. Chen and A.~Friedman.
\newblock A free boundary problem arising in a model of wound healing.
\newblock {\em SIAM J. Math. Anal.}, 32(4):778--800, 2000.

\bibitem{16}
J.~Crank.
\newblock {\em Free and Moving Boundary Problems}.
\newblock Clarendon Press, 1984.

\bibitem{Du2}
Y.H. Du and Z.M. Guo.
\newblock Spreading--vanishing dichotomy in a diffusive logistic model with a
  free boundary, ii.
\newblock {\em J. Differ. Equations}, 250(12):4336--4366, 2011.

\bibitem{Du3}
Y.H. Du, Z.M. Guo, and R.~Peng.
\newblock A diffusive logistic model with a free boundary in time--periodic
  environment.
\newblock {\em J. Funct. Anal.}, 265(9):2089--2142, 2013.

\bibitem{Du1}
Y.H. Du and Z.G. Lin.
\newblock Spreading--vanishing dichotomy in the diffusive logistic model with a
  free boundary.
\newblock {\em SIAM J. on Math. Anal}, 42(1):377--405, 2010.

\bibitem{Du5}
Y.H. Du and Z.G. Lin.
\newblock The diffusive competition model with a free boundary: invasion of a
  superior or infetior competitor.
\newblock {\em Discrete Contin. Dyn. Syst.}, 19(10):3105--3132, 2014.

\bibitem{Du4}
Y.H. Du and B.D. Lou.
\newblock Spreading and vanishing in nonlinear diffusion problems with free
  boundaries.
\newblock {\em Mathematics}, 17(10):2673--2724, 2015.

\bibitem{parodoxofenrichment}
N.G. Hairston, F.E. Smith, and L.B. Slobodkin.
\newblock Community structure, population control, and competition.
\newblock {\em Am. Nat.}, 94(879):421--425, 1960.

\bibitem{25}
G.~Lin.
\newblock Spreading speeds of a {L}otka--{V}olterra predator-prey system: The
  role of the predator.
\newblock {\em Nonlinear Anal.}, 74(7):2448--2461, 2011.

\bibitem{biologicalcontrolparadox}
F.L. Robert.
\newblock Evaluation of natural enemies for biological control: A behavioral
  approach.
\newblock {\em Trends Ecol. Evol.}, 5(6):196 -- 199, 1990.

\bibitem{wang11}
M.X. Wang.
\newblock On some free boundary problems of the prey--predator model.
\newblock {\em J. Differ. Equ.}, 256(10):3365--3394, 2014.

\bibitem{wang10}
M.X. Wang.
\newblock Spreading and vanishing in the diffusive prey--predator model with a
  free boundary.
\newblock {\em Commun. Nonlinear Sci. Numer. Simul.}, 23(1-3):311--327, 2015.

\bibitem{wang9}
M.X. Wang and Y.~Zhang.
\newblock Two kinds of free boundary problems for the diffusive prey--predator
  model.
\newblock {\em Nonlinear Anal.-Real}, 24:73--82, 2015.

\bibitem{wang12}
M.X. Wang and Y.~Zhang.
\newblock Dynamics for a diffusive prey--predator model with different free
  boundaries.
\newblock {\em J. Differ. Equ.}, 264(5):3527--3558, 2018.

\bibitem{wang7}
M.X. Wang and J.F. Zhao.
\newblock Free boundary problems for a lotka--volterra competition system.
\newblock {\em J. Dyn. Differ. Equ.}, 26(3):655--672, 2014.

\bibitem{wang6}
M.X. Wang and J.F. Zhao.
\newblock A free boundary problem for the predator--prey model with double free
  boundaries.
\newblock {\em J. Dyn. Differ. Equ.}, 29(3):957--979, 2017.

\bibitem{wang8}
M.X. Wang and Y.G. Zhao.
\newblock A semilinear parabolic system with a free boundary.
\newblock {\em Z. Angew. Math. Phys.}, 66(6):3309--3332, 2015.

\bibitem{17}
X.Z. Zeng and Y.G. Gu.
\newblock Existence and the dynamical behaviors of the positive solutions for a
  ratio-dependent predator--prey system with the crowing term and the weak
  growth.
\newblock {\em J. Differ. Equations}, 264(5):3559--3595, 2018.

\bibitem{Wangzhao2014a}
J.F. Zhao and M.X. Wang.
\newblock A free boundary problem of a predator--prey model with higher
  dimension and heterogeneous environment.
\newblock {\em Nonlinear Anal.- Real}, 16:250--263, 2014.

\bibitem{wang5}
Y.G. Zhao and M.X. Wang.
\newblock A reaction--diffusion--advection equation with mixed and free
  boundary conditions.
\newblock {\em J. Dyn. Differ. Equ.}, 2015.

\end{thebibliography}
\end{document}